\numberwithin{equation}{subsection}
\newtheorem{theorem}{Theorem}[section]
\newtheorem{lemma}[theorem]{Lemma}
\newtheorem{proposition}[theorem]{Proposition}
\newtheorem{corollary}[theorem]{Corollary}
\theoremstyle{definition}
\newtheorem{remark}[theorem]{Remark}
\DeclareMathOperator{\Lie}{Lie}
\DeclareMathOperator{\gl}{GL}
\def\sp{{\mathop{\rm Sp}}}
\def\esix{{\rm E}_6}
\def\stable{{\rm st}}
\def\ball{{\mathbb B}}
\def\hp{{\mathbb H}}
\newcommand{\stk}[1]{{\mathcal #1}}
\newcommand{\tilstk}[1]{{\til{\stk #1}}}
\newcommand{\barstk}[1]{{\bar{\stk #1}}}
\newcommand{\coarse}[1]{{\underline{#1}}}
\def\marked{{\rm m}}
\def\setcomp{\smallsetminus}
\def\aff{{\mathbb A}}
\def\cx{{\mathbb C}}
\def\ff{{\mathbb F}}
\def\proj{{\mathbb P}}
\def\rat{{\mathbb Q}}
\def\real{{\mathbb R}}
\def\integ{{\mathbb Z}}
\def\gp{{\mathbb G}}
\def\ww{{\mathbb W}}
\def\mmu{\bm{\mu}}
\def\idp{{\frak p}}
\def\idl{{\frak l}}
\def\ida{{\frak a}}
\def\idb{{\frak b}}
\def\mono{{\sf M}}
\def\calz{{\mathcal Z}}
\def\cald{{\mathcal D}}
\def\calf{{\mathcal F}}
\def\caln{{\mathcal N}}
\def\calo{{\mathcal O}}
\def\calx{{\mathcal X}}
\def\caly{{\mathcal Y}}
\DeclareMathOperator{\T}{T}
\DeclareMathOperator{\K}{K}
\DeclareMathOperator{\mat}{Mat}
\newcommand{\til}[1]{{\widetilde{#1}}}
\newcommand{\st}[1]{\{#1\}}
\newcommand{\ang}[1]{{{\langle #1 \rangle}}}
\newcommand{\hang}[1]{\langle\!\langle #1 \rangle\!\rangle}
\newcommand{\sra}[1]{\stackrel{#1}{\rightarrow}}
\newcommand{\rest}[1]{|_{#1}}
\def\liehodge{{\mathfrak{hg}}}
\def\lie{\mathfrak}
\global\let\det\undefined
\DeclareMathOperator{\det}{det}
\global\let\hom\undefined
\DeclareMathOperator{\hom}{Hom}
\DeclareMathOperator{\id}{id}
\DeclareMathOperator{\aut}{Aut}
\DeclareMathOperator{\gal}{Gal}
\DeclareMathOperator{\SL}{SL}
\DeclareMathOperator{\gu}{GU}
\DeclareMathOperator{\End}{End}
\DeclareMathOperator{\tr}{tr}
\DeclareMathOperator{\Frac}{Frac}
\def\res{\bm{{\rm R}}}
\DeclareMathOperator{\mt}{MT}
\DeclareMathOperator{\hg}{Hg}
\DeclareMathOperator{\cl}{Cl}
\def\gen{{\rm gen}}
\def\u{{\rm U}}
\DeclareMathOperator{\su}{SU}
\def\fin{{\rm fin}}
\DeclareMathOperator{\Cl}{Cl}
\def\ra{\rightarrow}
\def\tensor{\otimes}
\def\iso{\cong}
\def\cross{\times}
\def\inject{\hookrightarrow}
\def\bs{\backslash}
\def\sriso{\stackrel{\sim}{\rightarrow}}
\def\units{^\cross}
\DeclareMathOperator{\spec}{Spec}
\def\inv{^{-1}}
\def\twiddle{\sim}
\def\ab{^{{\rm ab}}}
\newcommand{\powser}[1]{[\![#1]\!]}
\newcommand{\laurser}[1]{(\!(#1)\!)}
\newenvironment{alphabetize}{\begin{enumerate}

}{\end{enumerate}}
\begin{document}

\title{On the abelian fivefolds attached to cubic surfaces}

\author{Jeffrey D. Achter}
\email{j.achter@colostate.edu}
\address{Department of Mathematics, Colorado State University, Fort
Collins, CO 80523} 
\urladdr{http://www.math.colostate.edu/~achter}

\thanks{This work was partially supported by a grant from the Simons
  foundation (204164).}
\subjclass[2010]{Primary 14J10; Secondary 11G18, 14D05, 14K30}

\begin{abstract}
To a family of smooth projective cubic surfaces one can canonically
associate a family of abelian fivefolds.  In characteristic zero, we
calculate the Hodge groups of the abelian varieties which arise
in this way.  In arbitrary characteristic we calculate the monodromy
group of the universal family of abelian varieties, and thus show that 
the Galois group of the 27 lines on a general cubic surface in
positive characteristic is as large as possible.
\end{abstract}

\maketitle

\section{Introduction}

The moduli space of smooth
complex cubic surfaces is open in an arithmetic quotient of the
complex $4$-ball \cite{act02,dolgachevetal05}.  This statement
is the complex specialization of a homeomorphism $\tau$ of stacks over
$\integ[\zeta_3,1/6]$ between $\stk S_\stable$, the moduli space of
stable cubic surfaces, and $\stk M$, the moduli space of principally
polarized abelian fivefolds equipped with an action by
$\integ[\zeta_3]$ of signature $(4,1)$ \cite{achtercubmod}. In
particular, to a cubic surface $Y$ one may canonically associate an
abelian fivefold $X = Q(Y)$; it is natural to try and characterize the
abelian varieties which arise in this way.

Different strategies have the potential to yield different
information.  For example, it is possible \cite[Cor.\ 5.9]{achtercubmod} to
reconstruct $Y$ from the theta divisor of the polarized abelian
variety $X$.  However, this geometric method doesn't obviously allow
one to understand, say, the endomorphism ring $\End(X)$.
Alternatively, at least over the complex numbers, one can (as Carlson
and Toledo do \cite{carlsontoledo13}) explicitly evaluate the 
necessary period integrals. This approach has the virtue of explicitly
relating symmetries of $Y$ to endomorphisms of $X$, but seems
unlikely to yield an explicit classification of all possible abelian varieties.

The main theme of the present paper is that, via $\tau$, techniques
from the theory of Shimura varieties (and their compactifications) can
be brought to bear on the moduli space of cubic surfaces.  

In Section \ref{secmt}, we completely classify the Hodge groups of
complex abelian fivefolds of the form $Q(Y)$. Proposition \ref{prophg}
describes the endomorphism algebras, and thus the
Hodge groups, which could conceivably arise in this way, while
Proposition \ref{propshape} guarantees that such behavior really does
occur.

In Section \ref{secmono}, we calculate the monodromy group of the
universal abelian variety when pulled back to $\stk S$.  In fact, the
Torelli map $\tau$ passes through the moduli stack $\stk T$ of cubic
threefolds, and we also calculate the monodromy group of the universal
abelian variety over $\stk T$ (Theorem \ref{thmono}).  As a
consequence, we are able to show (Corollary \ref{corexistsmallend})
that there are cubic surfaces $Y$ defined over $\rat$ with
endomorphism ring exactly equal to $\integ[\zeta_3]$.  The paper
concludes by using the theory of arithmetic toroidal compactifications
of Shimura varieties of PEL type in order to compute the Galois group
of the 27 lines on a sufficiently general cubic surface in 
characteristic at least five (Proposition \ref{propfieldoflines}).

The classification in Section \ref{secmt} and the
monodromy calculation in Section \ref{secmono} require, respectively,
a method for constructing principally polarized abelian varieties with
specified endomorphism ring and information about unitary Shimura
varieties.  These are developed in a broader context than that
required for understanding cubic surfaces.  This generality
imposes no extra burden on the proof, and these results may be of
independent interest.

Given a polarized abelian variety $(X,\lambda)$ and an
order $\calo \subset \End(X)$, by adjusting $(X,\lambda)$ in its
isogeny class one can either find a principal polarization, or expand
$\calo$ to a maximal order; but achieving both simultaneously seems
delicate.  
Section \ref{subsecendring}, especially Lemma \ref{lemcmsig}, gives
conditions under which one can find a principally polarized abelian
variety with specified commutative endomorphism algebra and an action by the ring
of integers of a quadratic imaginary field of specified signature.

The mod-$\ell$ monodromy calculation for $\stk S$ comes down to a
question of the irreducible components of $\stk M^\ell$, the moduli
space of $\integ[\zeta_3]$-abelian varieties equipped with a principal
level $\ell$ structure.  This is worked out in detail in Lemma
\ref{lemunitarymono} for an arbitrary PEL Shimura variety attached to
$\integ[\zeta_3]$; the proof relies both on class field theory and on
arithmetic compactifications.

Part of the material in Section \ref{secmt} was developed in response
to \cite{carlsontoledo13} and subsequent exchanges with its authors;
it's a pleasure to thank J.\ Carlson and D.\ Toledo for their insights
and encouragement.  C.\ Hall and N.\ Katz kindly noted a mistake in an
earlier work (see Remark \ref{remwhoops}), which is now corrected in
Section \ref{subsecunitarymono}.  B.\ Gross pointed out that
\cite{grosssigns} contains a simple criterion for 
condition \eqref{thecond}.  Finally, this paper benefited from the
referee's careful reading and concomitant suggestions.

\section{Background and Notation}

The following notation will be in force throughout the paper, {\em
  except} that: in Section \ref{subsecendring}, $E$ denotes an arbitrary
quadratic imaginary field, and not necessarily $\rat(\zeta_3)$; and in Section
\ref{subsecunitarymono}, $\stk M$ denotes the moduli space of abelian
varieties with $\integ[\zeta_3]$-action of arbitrary, fixed signature
$(r,s)$, and not necessarily signature $(4,1)$.

\subsection{Moduli spaces}

Let $\calo_E = \integ[\zeta_3]$ be the ring of Eisenstein integers,
with fraction field $E = \rat(\zeta_3)$.

The present work explores some consequences of the existence of
nontrivial maps between moduli spaces of cubic surfaces, cubic
threefolds, and abelian fivefolds.

Let $\stk S$ be the moduli stack of smooth cubic surfaces.  It is the
quotient of $\tilstk S$, the moduli scheme of smooth cubic forms in
four variables, and is partially compactified by $\stk S_\stable$, the
moduli stack of stable cubic surfaces.  Similarly, let $\stk T$ be the
moduli stack of smooth cubic threefolds, and let $\tilstk T$ be the
moduli scheme of smooth cubic forms in five variables.

Finally, let $\stk A_5$ be the moduli stack of principally polarized
abelian fivefolds, and let $\stk M$ be the moduli stack of principally polarized abelian
  fivefolds equipped with an action by $\integ[\zeta_3]$ of signature
  $(4,1)$.

The main work of \cite{achtercubmod} is to define and characterize
the morphisms $\til\varpi$ (over $\integ[1/2]$) and $\tau$ (over
$\integ[\zeta_3,1/6]$) indicated below: 
\begin{diagram}
\tilstk S & \rto^{\til \phi} & \tilstk T \\
\dto && \dto &\rdto>{\til\varpi}\\
\stk S & \rto^\phi & \stk T &&\stk A_5 \\
&\rdto>\tau&&\ruto^\jmath &\\
&&\stk M
\end{diagram}
In this diagram $\phi$ is the functor which, to a cubic surface $Y\subset
\proj^3$, associates the cubic threefold $Z\subset \proj^4$ which is
the cyclic triple cover of $\proj^3$ ramified exactly along $Y$.  The
morphism $\til\varpi$ comes from an arithmetic Prym construction; over
the complex numbers, it coincides with the intermediate Jacobian
functor.  (It seems plausible that $\til\varpi$ factors through $\stk
T$, but the author has not checked this.)

Let $\stk D \subset \stk M$ be the substack parametrizing
$\integ[\zeta_3]$-abelian schemes which are isomorphic to the product
of an elliptic curve and an abelian fourfold.  It is a horizontal
divisor; let $\stk N = \stk M - \stk D$.  The main result of
\cite{achtercubmod} is:

\begin{theorem}[{\cite[Thm.\ 5.7]{achtercubmod}}]
\label{thmoduli}
There is an isomorphism of stacks $\tau:\stk S \ra \stk N$ over
$\integ[\zeta_3,1/6]$ which
extends to a homeomorphism of stacks $\stk S_\stable \ra \stk M$.
\end{theorem}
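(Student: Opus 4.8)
The plan is to realize $\tau$ through the intermediate Jacobian (Prym) construction and then verify it is an isomorphism by combining infinitesimal Torelli with a global injectivity statement, treating the boundary behavior separately. First I would construct the morphism: to a smooth cubic surface $Y \subset \proj^3$ one associates via $\phi$ the cyclic cubic threefold $Z \subset \proj^4$, and via $\til\varpi$ its intermediate Jacobian $X = Q(Y)$, a principally polarized abelian fivefold whose polarization comes from the Clemens--Griffiths theta divisor. The order-three deck transformation of $Z \ra \proj^3$ induces an action of $\integ[\zeta_3]$ on $X$, and I would compute its signature by decomposing the five-dimensional space $H^{2,1}(Z) \iso H^{1,0}(X)$ into $\zeta_3$-eigenspaces, obtaining dimensions $4$ and $1$; this exhibits a morphism $\tau : \stk S \ra \stk M$ over $\integ[\zeta_3,1/6]$. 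Already here one must take care to control the Prym construction integrally, so that the polarization and the $\integ[\zeta_3]$-action extend over the full base, including in positive characteristic.

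Next I would show $\tau$ is an isomorphism onto $\stk N = \stk M \setminus \stk D$. Since the intermediate Jacobian of a smooth cubic threefold is indecomposable as a principally polarized abelian variety, $Q(Y)$ is never a product of an elliptic curve with an abelian fourfold, so $\tau$ factors through $\stk N$. To see that it is \'etale I would run an infinitesimal Torelli argument: the tangent space to $\stk S$ at $[Y]$ is the four-dimensional space of first-order deformations of the cubic form modulo $\mathrm{PGL}_4$, the tangent space to $\stk M$ at $[X]$ is the $(4,1)$-isotypic part of the deformations of $X$ respecting $\integ[\zeta_3]$, again four-dimensional, and the differential of the period map identifies the two. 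Combined with the global Torelli theorem for cyclic cubic threefolds --- recovering $Y$ from $(X,\lambda)$ together with its $\integ[\zeta_3]$-action --- this makes $\tau$ an \'etale monomorphism, hence an open immersion. Surjectivity onto $\stk N$ I would obtain by comparison with the complex-analytic picture of \cite{act02,dolgachevetal05}, where the image is identified with the relevant ball quotient, and then spread out over $\integ[\zeta_3,1/6]$ using that source and target are smooth of the same relative dimension with geometrically connected fibers.

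The main obstacle is the extension to a homeomorphism $\stk S_\stable \ra \stk M$ across the boundary. Stable cubic surfaces acquire only nodes, and I would match the resulting degenerations of $X$ with the divisor $\stk D$: as a node develops on $Y$ an elliptic factor should split off the intermediate Jacobian, carrying $\tau$ into $\stk D$. Checking that the extended map is genuinely a homeomorphism --- continuous, bijective, and proper --- requires analyzing these degenerations through the theory of toroidal and Satake--Baily--Borel compactifications on the abelian side and matching them precisely against the stable reduction of cubic surfaces, and this compatibility is where the bulk of the technical work lies.
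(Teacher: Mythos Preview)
This theorem is not proved in the present paper; it is quoted verbatim as the main result of \cite{achtercubmod} (Theorem~5.7 there) and used here as a black box. There is therefore no proof in this paper to compare your proposal against.

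That said, your sketch is broadly consonant with the hints the paper does give. The map $\tau$ is indeed assembled from the cyclic-cover functor $\phi$ and the arithmetic Prym construction $\til\varpi$ (see the commutative diagram in the Background section), and the paper notes that over $\cx$ the latter coincides with the intermediate Jacobian. Your plan of infinitesimal Torelli plus global Torelli, followed by spreading out from the complex ball-quotient picture of \cite{act02,dolgachevetal05}, is a reasonable architecture. The paper also remarks, immediately after the statement, that on the boundary the extension is only a homeomorphism and not an isomorphism of stacks because nodal cubics may lack nontrivial automorphisms; this is consistent with your caution about the boundary analysis. For the actual argument---in particular, how the Prym construction is controlled integrally over $\integ[\zeta_3,1/6]$ and how the degeneration across $\stk D$ is handled---you will need to consult \cite{achtercubmod} directly.
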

(On the boundary, the existence of nodal cubic surfaces without
nontrivial automorphisms means that, even over $\cx$, $(\stk S_\stable\setcomp
\stk S)_\cx \ra \stk D_\cx$ is not an isomorphism of stacks; see
\cite[3.18]{act02} for details.)

The universal abelian fivefold, cubic surface and cubic threefold will
be respectively denoted by $f:\calx \ra \stk A_5$, $g:\caly \ra \stk
S$ and $h:\calz \ra \stk T$; we will abuse notation and use the same
letters for the universal objects over $\tilstk S$ and $\tilstk T$.
The middle cohomology of $\calz$ is a Tate twist of the first
cohomology of $\til\varpi^* \calx$; see \cite[Prop.\
3.6]{achtercubmod} for a precise statement.

As a shorthand for the Prym functor, for $t\in \tilstk T$ let
$P(\calz_t) = \calx_{\til\varpi(t)}$.  Thus, if $Z$ is a cubic
threefold, $P(Z)$ is the principally polarized abelian fivefold
attached to $Z$ by $\til\varpi$.  Similarly, let $F(Y)$ be the cubic
threefold which is the cyclic triple cover of $\proj^3$ ramified along
$Y$;  $F(\caly_s) =\calz_{\phi(s)}$.  Finally, let $Q(Y) = P(F(Y))$.

If $X$ is an abelian variety, denote its endomorphism algebra by
$\End(X)_\rat = \End(X)\tensor\rat$.

\subsection{Unitary groups}
\label{subsecunitary}

Let $V = \calo_E^{\oplus 5}$, and equip it with the Hermitian pairing
\[
\hang{x,y} = \sum_{1 \le j \le 4}x_j\bar y_j - x_5\bar y_5.
\]
  Then
$\ang{x,y} = \tr(\hang{x,y}/\sqrt{-3})$
is a symplectic, $\integ$-linear pairing on the underlying
$\integ$-module of $V$, and $\ang{ax,y}= \ang{x, \bar a y}$ for $a \in
\calo_E$.  
Define various
unitary group schemes over $\integ$ as follows:
\begin{align}
\gu &= \gu(V,\hang{\cdot,\cdot}) \notag \\
\u &= \u(V,\hang{\cdot,\cdot}) \notag \\
\su &= \su(V,\hang{\cdot,\cdot})\notag \\
\intertext{and the tori}
\T &= \res_{\calo_E,\integ}\gp_{m,E} \label{eqdeft}\\
\T_1 &= \res_{\calo_E,\integ}(\ker \caln_{\calo_E/\integ}) \label{eqdeftone}
\end{align}
where $\res_{\calo_E/\integ}$ is the restriction of scalars functor.
Then $\su$ is the derived group of $\gu$ and of $\u$, and there are the
following sequences of group schemes, exact as sheaves on
$(\spec \integ[1/6])^{{\rm et}}$:
\begin{diagram}[LaTeXeqno]
\label{diagesunitary} 1 & \rto & \su & \rto & \gu & \rto^\nu & \T & \rto & 1\\
1 & \rto &\su & \rto & \u & \rto^\nu & \T_1 & \rto & 1 
\end{diagram}
There is a natural inclusion of group schemes $\res_{\calo_E/\integ}\mmu_6 \inject
\T_1$. Let
\begin{equation}
\label{eqdefustar}
\u^* = \u \cross_{\T_1} \mmu_6,
\end{equation}
an extension of $\mmu_6$ by $\su$.

For each natural number $N$, $\su(\integ) \ra \su(\integ/N)$ is
surjective (Lemma \ref{lemimageunitary}).  Since $\T_1(\integ)=
\res_{\calo_E/\integ}\mmu_6(\integ)$ is the group of sixth roots of
unity (and not larger), $\u^*(\integ/N)$ is the image of the reduction map $\u(\integ) \ra
\u(\integ/N)$.

\section{Mumford-Tate groups and endomorphism rings}
\label{secmt}

Carlson and Toledo have raised the question of classifying the
Mumford-Tate groups of abelian varieties
associated to complex cubic surfaces.  In this section, we avail
ourselves of known results on endomorphisms of abelian varieties in
order to classify the algebras $\End(Q(Y))_\rat$ as $Y$ ranges over
all smooth complex cubic surfaces.  Briefly, we find that each abelian
variety has at most one simple factor of dimension greater than one;
this yields a coarse classification by ``shape'', i.e., by the
dimension of the largest simple factor and the dimension of its
endomorphism algebra. After an apparent digression
on the problem of constructing principally polarized abelian varieties
with large endomorphism ring (as opposed to endomorphism algebra), we
are able to show that every possible shape occurs in $\stk S$.

Carlson and Toledo have independently
investigated the Mumford-Tate groups of the abelian varieties which
arise here, and the reader is invited to consult
\cite{carlsontoledo13} for a complementary perspective which
emphasizes period integrals.  

\subsection{Topological monodromy}

Consider the spaces of complex cubic surfaces and threefolds.  By pulling back
the middle cohomology of the universal family of cubic threefolds, one
obtains a local system $\til\phi^*R^3h_*\integ$ on $\til S_\cx$ of polarized
$\integ[\zeta_3]$-modules of rank $5$ equipped with a Hermitian
pairing.  Fix a base point $s\in \til S(\cx)$; the local system
$\til\phi^*R^3h_*\integ$ is determined by a representation $\pi_1^\top(\til
S_\cx, s) \ra \aut(H^3(\stk Z_{\phi(s)},\integ))$ of the topological
fundamental group of $\til S_\cx$, whose image is simply
called the (topological) monodromy group of $\stk Y_\cx \ra \tilstk S_\cx$.

\begin{proposition}[{\cite[Thm.\ 2.14]{act02}}]\label{propactmono}
The monodromy group of the universal family of cubic surfaces is a
subgroup $\u^+(\integ)\subset \u(\integ)$ of index two.
\end{proposition}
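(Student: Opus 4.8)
The plan is to realize the monodromy as a group generated by complex reflections and then to pin down exactly which subgroup of $\u(\integ)$ it is. First, since the monodromy preserves both the $\calo_E$-module structure on $H^3(\calz_{\phi(s)},\integ)$ and the Hermitian pairing $\hang{\cdot,\cdot}$, its image automatically lies in $\u(V,\hang{\cdot,\cdot})(\integ)=\u(\integ)$; the real content is the identification of this image with a specific index-two subgroup. To generate the image I would pass to the discriminant complement: $\tilstk S_\cx$ is the complement in $\proj(\text{cubic forms})$ of the discriminant hypersurface $\Delta$ parametrizing singular cubics. Because $\Delta$ is irreducible, all meridians around $\Delta$ are conjugate in $\pi_1^\top(\tilstk S_\cx,s)$, and a van Kampen/Zariski argument shows that $\pi_1$ is normally generated by a single meridian. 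Hence the monodromy group is generated by the $\pi_1$-conjugates of one local Picard--Lefschetz transformation.

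The second step is to compute that local transformation. As a general cubic surface $Y$ degenerates to acquire a single node, its triple cover $\calz$ acquires an isolated singularity whose local equation is the Brieskorn form $x_1^2+x_2^2+x_3^2=x_4^3$. This singularity has Milnor number two, and its monodromy eigenvalues are the two primitive sixth roots of unity; equivalently, on the rank-two (over $\integ$) vanishing lattice, which is a rank-one $\calo_E$-module spanned by a vanishing class $\delta$, the local monodromy acts as multiplication by $\zeta_6$ on $\calo_E\delta$ and fixes $\delta^\perp$. Thus each local monodromy is an order-six complex reflection $R_\delta$, and the monodromy group is generated by the reflections $R_\delta$ as $\delta$ runs through the single orbit of nodal vanishing roots.

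The third step is to locate this reflection group inside $\u(\integ)$. I would reduce the Hermitian lattice modulo $\sqrt{-3}$: since conjugation is trivial on $\calo_E/(\sqrt{-3})=\ff_3$, the pairing descends to a nondegenerate quadratic form on $V/\sqrt{-3}V\cong\ff_3^{\oplus5}$, yielding a homomorphism $\u(\integ)\to\orth(\ff_3^{\oplus5})$. Because $\zeta_6\equiv -1\pmod{\sqrt{-3}}$, each $R_\delta$ maps to the orthogonal reflection in the image of $\delta$, whose spinor norm is the square class of $\hang{\delta,\delta}$; one checks the nodal roots all have square norm, so every generator has trivial spinor norm. The monodromy group therefore lies in the kernel $\u^+(\integ)$ of the resulting spinor-norm character $\theta\colon\u(\integ)\to\{\pm1\}$, an index-two subgroup once $\theta$ is shown to be surjective.

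The main obstacle is the reverse inclusion: proving that the nodal reflections generate the \emph{entire} subgroup $\u^+(\integ)$, and not something smaller. I expect this to require explicit reduction theory for the Eisenstein unitary group --- exhibiting a generating set for $\u(\integ)$, or for $\su(\integ)$ after accounting for the determinant $\det R_\delta=\zeta_6$ which already surjects onto $\mmu_6=\T_1(\integ)$, as words in reflections attached to an explicit configuration of vanishing roots. Controlling the spinor-norm constraint and this generation simultaneously, and thereby verifying that the index is exactly two rather than larger, is the delicate arithmetic heart of the argument.
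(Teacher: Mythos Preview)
The paper does not give its own proof of this proposition: it is quoted directly as \cite[Thm.\ 2.14]{act02}, and the surrounding text only extracts the consequence that $\u^+(\integ)$ contains $\su(\integ)$ (since $\su(\integ)$ has no index-two subgroup) and is hence an extension of $\mmu_3$ by $\su(\integ)$. There is therefore nothing in the paper to compare your argument against.

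That said, your proposal is a faithful outline of the strategy actually used in \cite{act02}: the monodromy is generated by the conjugates of a single local transformation around the discriminant, that transformation is a complex reflection in an Eisenstein root coming from the $A_2$ degeneration of the associated cubic threefold, reduction modulo $\sqrt{-3}$ yields a quadratic $\ff_3$-space and a spinor-norm character on $\u(\integ)$ which cuts out the index-two subgroup, and the hard step is the reverse inclusion, which in \cite{act02} is carried out via explicit generators for the Eisenstein unitary group. You have correctly identified both the architecture and the location of the real work; just be aware that if you were to fill in the details you would essentially be reproducing the Allcock--Carlson--Toledo argument rather than anything the present paper contributes.
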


In fact, \cite{act02} gives a beautifully explicit characterization of
$\u^+(\integ)$.  For present purposes, it suffices to note that since
$\su(\integ)$ has no subgroup of index two, $\u^+(\integ)$ must
contain $\su(\integ)$ and is therefore an extension of $\mmu_3$ by
$\su(\integ)$.
This will be used below (Section \ref{subsecmt}) in describing
Mumford-Tate groups of intermediate Jacobians of cubic threefolds
attached to cubic surfaces.  

The comparison theorem between Betti and \'etale cohomology lets one
conclude that the connnected component of the geometric
$\bar\rat_\ell$-monodromy group of $\stk Y \ra \tilstk S$ is
$\su_{\bar\rat_\ell}$.  However, a direct calculation (Theorem 
\ref{thmono}) below yields more
detailed information (see Remark \ref{remmonoqlbar}) about the
monodromy group of the \'etale cohomology.

\subsection{Hodge groups and Mumford-Tate groups}
\label{subsecmt}

Let $X/\cx$ be a complex abelian variety.  Then $W := H^1(X(\cx),\rat)$ is
a polarizable Hodge structure.  If the Hodge structure on $W$ is
realized as a map of $\real$-groups $h:\res_{\cx/\real}\gp_m \ra
\aut(W\tensor_\rat\real)$, then the Mumford-Tate group $\mt(W)$ is the
smallest algebraic subgroup $M/\rat$ such that $h$ factors through
$M_\real$.  The Hodge structure is called special if $\mt(W)$ is a torus.

In a similar sense, the Hodge group $\hg(W)$ of $W$ is the
algebraic hull of the image of the restriction of $h$ to the unit
circle in $\cx$.  From this description, it is clear that $\mt(W)$ is
an extension of $\gp_m$ by $\hg(W)$.  Let $\liehodge(W)$ be the Lie
algebra of $\hg(W)$.  Finally, let $\mt(X)$, $\hg(X)$ and $\liehodge(X)$
denote the corresponding objects for $W$. 

Now consider $\ww = \til\phi^*R^3h_*\rat$ as a polarized variation of
Hodge structure on $\tilstk S_\cx$.  There is a generic Mumford-Tate
group $\mt^\gen(\ww)$ associated to $\ww$.  It is characterized by the
fact that for each $s\in \tilstk S(\cx)$, $\mt(\ww_s)\subseteq
\mt^\gen(\ww)$; and for $s$ in the complement of a certain countable
union of proper ({\em a priori} analytic, but actually algebraic)
subvarieties of $\tilstk S$, there is equality $\mt(\ww_s)\iso
\mt^\gen(\ww)$.

\begin{lemma}
\label{lemgenmt}
The generic Mumford-Tate group of $\ww$ is  $\mt^\gen(\ww) =
\gu_\rat$.
\end{lemma}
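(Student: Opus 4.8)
The plan is to bracket $\mt^\gen(\ww)$ between $\gu_\rat$ from above, using the endomorphism and polarization structure carried by $\ww$, and from below, using the monodromy of Proposition~\ref{propactmono} together with the signature $(4,1)$. Working at a very general $s\in\tilstk S(\cx)$, where $\mt(\ww_s)=\mt^\gen(\ww)$, I would first note that the $\calo_E$-action on $\ww_s$ is by morphisms of Hodge structure, so $h_s$ commutes with $\calo_E$ and hence factors through the $\rat$-group $\gl_E(\ww_s)$; by minimality of the Mumford-Tate group, $\mt(\ww_s)\subseteq\gl_E(\ww_s)$. Since $h_s$ preserves $\ang{\cdot,\cdot}$ up to the weight scalar, $\mt(\ww_s)$ also lies in the group $\mathrm{GSp}(\ww_s,\ang{\cdot,\cdot})$ of symplectic similitudes; and because $\hang{\cdot,\cdot}$ is recovered from $\ang{\cdot,\cdot}$ and the $E$-action, an $E$-linear symplectic similitude is exactly a unitary similitude. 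This gives $\mt^\gen(\ww)\subseteq\gu_\rat$, and in particular $(\mt^\gen(\ww))^{\mathrm{der}}\subseteq\su_\rat$.

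For the reverse inclusion on derived groups, recall from Proposition~\ref{propactmono} that the monodromy group contains $\su(\integ)$. As $\su_\rat$ is $\rat$-simple and $\su(V\tensor\real)\iso\su(4,1)$ is noncompact, $\su(\integ)$ is an arithmetic lattice and hence Zariski dense by Borel's density theorem; so the connected algebraic monodromy group is $\su_\rat$. By the theorem of Deligne and André relating the algebraic monodromy group to the generic Mumford-Tate group, $\su_\rat$ is normal in $(\mt^\gen(\ww))^{\mathrm{der}}$, and combining this with the upper bound forces $(\mt^\gen(\ww))^{\mathrm{der}}=\su_\rat$. Since the centralizer of $\su$ in $\gu$ is its center $\T$, the group $\mt^\gen(\ww)$ is generated by $\su_\rat$ and a subtorus $C\subseteq\T_\rat$; as $\nu=\det$ restricts on the central $\T$ to the isogeny $a\mapsto a^5$, one has $\su\cdot\T=\gu$, so it remains only to prove that $C=\T_\rat$.

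This last point is the main obstacle, and it is exactly where the signature enters. I would extract it from the Hodge cocharacter $\mu_h:\gp_{m,\cx}\ra\mt^\gen(\ww)_\cx$. Writing $E\tensor_\rat\cx\iso\cx\times\cx$ for the two embeddings $\sigma,\bar\sigma$ and decomposing $V\tensor_\rat\cx=V_\sigma\oplus V_{\bar\sigma}$ accordingly, the signature $(4,1)$ says that the Hodge filtration meets $V_\sigma$ and $V_{\bar\sigma}$ in dimensions $4$ and $1$. Hence $\nu\comp\mu_h=\det\comp\mu_h$ has cocharacter $(4,1)$ in $X_*(\T)\iso\integ^2$, a class not proportional to the diagonal $(1,1)$ that would be forced if $C$ were only the split $\gp_m$. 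Because $\mt^\gen(\ww)$, and therefore $\nu(\mt^\gen(\ww))$, is defined over $\rat$ and thus stable under complex conjugation, its cocharacter lattice must contain the whole orbit $\st{(4,1),(1,4)}$; the saturation of $\integ\cdot(4,1)+\integ\cdot(1,4)$ is all of $\integ^2$, so $\nu(\mt^\gen(\ww))=\T_\rat$ and $C=\T_\rat$. Together with $(\mt^\gen(\ww))^{\mathrm{der}}=\su_\rat$ this yields $\mt^\gen(\ww)=\gu_\rat$. The delicate part throughout is verifying that the signature produces a non-diagonal cocharacter whose Galois orbit generates the full torus; everything else is formal group theory plus the quoted monodromy.
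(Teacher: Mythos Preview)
Your argument is correct and in fact more explicit than the paper's. The paper's proof is brief: it invokes the density of special points in the Shimura variety $\stk M_\cx$ (hence in $\tilstk S_\cx$) so as to apply Andr\'e's refinement of Deligne's fixed-part theorem, which gives directly that $(\mt^\gen(\ww))^{\rm der}$ equals the connected algebraic monodromy group; Proposition~\ref{propactmono} then identifies this with $\su_\rat$, and the paper simply asserts the conclusion, leaving the passage from the derived group to the full $\gu_\rat$ implicit. You instead bypass special points---Borel density plus the general Deligne--Andr\'e normality statement, squeezed against the upper bound $\mt^\gen\subseteq\gu_\rat$ furnished by the PEL structure, already pins down the derived group---and you then supply the torus computation that the paper omits, via the Hodge cocharacter and the unbalanced signature $(4,1)$. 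Your route is longer but more self-contained, and it makes visible precisely where the signature hypothesis enters. One harmless imprecision: the paper's $\nu:\gu\to\T$ is the abelianization map, which agrees with $\det_E$ only up to a degree-$5$ isogeny of $\T$; since all you need is that the image of $\mu_h$ in the two-dimensional quotient torus has a cocharacter not fixed by complex conjugation, and hence that the image torus is two-dimensional, this does not affect the conclusion.
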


\begin{proof}
Since $\stk M$ is a Shimura variety, special points are analytically
dense in $\stk M_\cx$, and thus in $\tilstk S_\cx$.  Consequently
\cite[Prop.\ 2]{andrefixedpart}, the derived  
subgroup of $\mt^\gen(\ww)$ is equal to the connected component of the
(Zariski closure of) the topological monodromy group.  The result now
follows from Proposition \ref{propactmono}.
\end{proof}

In order to classify all groups which arise as Mumford-Tate groups of
abelian varieties associated to cubic surfaces, in principle one could
work out all sub-Shimura data of the (connected) Shimura datum $(\u, \ball^4)$.  Instead,
we proceed as follows.  The Mumford-Tate group of an abelian variety is invariant
under isogeny of the abelian variety.  The Hecke orbit of each $s\in
\stk M(\cx)$ is analytically dense.  In particular, each point in
$\stk D(\cx)$, the complement of the Torelli map $\stk S(\cx) \ra \stk
M(\cx)$, is isogenous to one in $\stk N(\cx) = (\stk M\setcomp\stk D)(\cx)$, and thus
every Mumford-Tate group which arises in $\stk M$ already arises in
the variation of Hodge structure $\ww \ra \tilstk S_\cx$.
It turns out that, for a complex abelian fivefold with action by
$\integ[\zeta_3]$, the endomorphism algebra completely determines the Mumford-Tate
group.  Consequently, we proceed by classifying endomorphism
algebras and then listing the associated Mumford-Tate (actually,
Hodge) groups.

\subsection{Mumford-Tate groups and endomorphism algebras}
\label{subsecendcx}

In this section we work up to isogeny, and in particular ``simple''
means ``simple up to isogeny''.

As a preview, note that if one is willing to restrict attention to
abelian fivefolds with $\integ[\zeta_3]$-action which are simple up to
isogeny, then the calculation is much shorter.  Indeed, suppose $X =
(X,\iota,\lambda) \in \stk M(\cx)$ is simple.  Then, using the fact
that $\dim(X) = 5$ is prime, one quickly sees that $\End(X)_\rat$ is
either $\rat(\zeta_3)$ or a CM field $K$ such that $[K:\rat] = 10$.
In the former case, the abelian variety has Mumford-Tate group equal
to $\mt^\gen(\ww) \iso \gu_\rat$.  In the latter case, $K$ is
necessarily the compositum of $\rat(\zeta_3)$ and $K_0$, the maximal
totally real subfield of $K$, and the Mumford-Tate group of the
abelian variety is a torus.  The existence of the two kinds of points
over number fields is explored in Corollary \ref{corexistsmallend} and
\cite[Sec.\ 4]{carlsontoledo13}, respectively.

Let $X_0/\cx$ be the elliptic curve with action by $\integ[\zeta_3]$ of
signature $(1,0)$.  

Suppose $B$ is an arbitrary abelian variety.  Then $\End(B\cross B)$
is canonically isomorphic to $\mat_2(\End(B))$.  Via the embeddings 
\begin{diagram}
\integ[\zeta_3] & \rto & \End_\integ(\integ[\zeta_3]) & \rto^\twiddle
& \mat_2(\integ) & \rinject & \mat_2(\End(B)) &
\rto^\twiddle \End(B\cross B),
\end{diagram}
$B\times B$ admits an action by $\integ[\zeta_3]$ of signature $(\dim
B, \dim B)$.

\begin{lemma}
\label{lemclassisog}
If $X$ is a complex abelian fivefold with action by $\integ[\zeta_3]$
of signature $(4,1)$, then $X$ is
isogenous, as an abelian variety with $\integ[\zeta_3]$-action, to an abelian variety of exactly one of the following forms:
\begin{alphabetize}
\item $X_0^{ 3}\cross X_1^{ 2}$, where $X_1$ is an
  elliptic curve which is not isogenous to $X_0$.
\item $X_0^{(5-n)}\cross X_n$, where $1 \le n \le 5$ and $X_n$
  is a simple abelian variety of dimension $n$ with an action by
  $\integ[\zeta_3]$ of signature $(n-1,1)$.
\end{alphabetize}
\end{lemma}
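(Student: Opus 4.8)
The plan is to classify $X$ up to isogeny by analyzing its decomposition into simple factors, using the constraints imposed by the $\integ[\zeta_3]$-action of signature $(4,1)$. Since $X$ is an abelian variety of dimension $5$, by Poincar\'e reducibility it is isogenous to a product $\prod_i Y_i^{e_i}$ of powers of pairwise non-isogenous simple abelian varieties $Y_i$. The $\integ[\zeta_3]$-action gives each simple factor $Y_i$ the structure of an abelian variety admitting complex multiplication by $E = \rat(\zeta_3)$ in its endomorphism algebra; in particular $\End(Y_i)_\rat \supseteq E$, so $\dim Y_i$ is constrained and each factor carries its own signature $(r_i, s_i)$ with $r_i + s_i = \dim Y_i$. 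The total signature adds up: $\sum_i e_i (r_i, s_i) = (4,1)$.

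First I would exploit the fact that the signature of the whole is $(4,1)$, so in particular the second coordinate equals $1$. Because the factors with $E$-action have nonnegative signature coordinates, exactly one ``block'' can contribute to the $s$-coordinate. The key dichotomy is whether the $1$ in the signature comes from a single simple factor appearing with multiplicity one, or whether it is forced to be distributed among repeated elliptic-curve factors. Here the remark in \ref{subsecendcx} is crucial: for an elliptic curve $B$, the product $B \cross B$ acquires an $\integ[\zeta_3]$-action of \emph{balanced} signature $(\dim B, \dim B) = (1,1)$. So a pair $X_0 \cross X_0$ (or more generally any pair of isogenous elliptic curves equipped with this doubling action) contributes signature $(1,1)$, while a single elliptic curve with $\integ[\zeta_3]$-action has signature either $(1,0)$ or $(0,1)$. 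This is what produces the two distinct normal forms.

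Next I would argue that the signature coordinate $s=1$ must be realized either by one simple factor $X_n$ of signature $(n-1,1)$ appearing once (giving form (b), with the remaining signature $(5-n,0)$ necessarily filled by copies of the signature-$(1,0)$ elliptic curve $X_0$), or by a doubled elliptic-curve block of signature $(1,1)$ (giving form (a)). In the latter case, the signature contributed by the $(1,1)$-block is $(1,1)$, leaving $(3,0)$ to be filled by three copies of $X_0$; the doubled block $X_1 \cross X_1$ must use an elliptic curve $X_1$ \emph{not} isogenous to $X_0$, since otherwise the whole thing is a power of $X_0$ which is already captured (degenerately) under (b) with $n=1$ — and the two cases are mutually exclusive precisely because of this non-isogeny condition. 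The uniqueness (``exactly one of the following'') then follows from the uniqueness of the isogeny decomposition into simple factors together with the bookkeeping of signatures.

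The main obstacle I anticipate is not the combinatorial signature count but rather verifying that a \emph{simple} factor of dimension $n$ admitting an $\integ[\zeta_3]$-action genuinely carries a well-defined signature $(n-1,1)$ (rather than some other split) once the global signature is $(4,1)$, and showing that no other configurations of repeated factors are compatible. One must rule out, for instance, a simple factor $X_n$ with $n \ge 2$ appearing with multiplicity $\ge 2$, or two distinct nontrivial simple factors each contributing to the signature; both are excluded because each would force the $s$-coordinate to exceed $1$. The delicate point is handling the elliptic-curve case carefully: an elliptic curve with CM by $E$ has $\End_\rat = E$ already, so it \emph{is} its own $\integ[\zeta_3]$-action of signature $(1,0)$ or $(0,1)$, and I must confirm that the signature-$(0,1)$ possibility is incompatible with signature $(4,1)$ appearing as an unpaired factor, forcing the $(1,1)$-pairing described in the remark. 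Once these signature-compatibility constraints are pinned down, the classification into exactly the two listed forms is forced.
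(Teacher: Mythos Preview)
Your signature-bookkeeping strategy matches the paper's, but there are two genuine gaps.

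First, your opening claim is false: decomposing $X$ into simple \emph{abelian varieties} $Y_i$ does not force $E \subseteq \End(Y_i)_\rat$. The $\integ[\zeta_3]$-action only stabilizes each isotypic block $Y_i^{e_i}$, yielding an embedding $E \hookrightarrow \mat_{e_i}(\End(Y_i)_\rat)$, and this need not descend to $\End(Y_i)_\rat$. Indeed, case~(a) itself has $X_1$ an elliptic curve not isogenous to $X_0$, so one may perfectly well have $\End(X_1)_\rat = \rat$; your own later discussion of the doubled block tacitly admits this, so the argument as written is internally inconsistent. The paper's fix is to decompose $X$ into factors that are simple \emph{as $\integ[\zeta_3]$-abelian varieties}; such a factor is then either genuinely simple, or isogenous to $B\times B$ for some $B$ with $E \not\subset \End(B)_\rat$, and in that case its signature is $(\dim B,\dim B)$. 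With this framing every factor really does carry a signature and the additivity argument goes through.

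Second, and more seriously, you never justify why the factors with $s$-coordinate $0$ must be one-dimensional. Your exclusion of ``a simple factor $X_n$ with $n\ge 2$ appearing with multiplicity $\ge 2$'' or ``two distinct nontrivial simple factors'' on the grounds that ``each would force the $s$-coordinate to exceed $1$'' already presumes that any simple factor of dimension $\ge 2$ has $s_i \ge 1$. Nothing you have said rules out, say, a simple abelian surface with $\integ[\zeta_3]$-action of signature $(2,0)$ sitting alongside the factor of signature $(n-1,1)$. The paper closes this by invoking a theorem of Shimura: a simple complex abelian variety with $\integ[\zeta_3]$-action of signature $(r,0)$ necessarily has $r=\dim = 1$. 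That is the nontrivial input which forces all the residual pieces to be copies of $X_0$, and without it the classification does not follow.
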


\begin{remark}
The case $n=5$ was already described at the
end of Section \ref{subsecmt}.
\end{remark}

\begin{proof}
Suppose $A$ is an abelian variety with an action by
$\integ[\zeta_3]$.  If $A$ is simple as a $\integ[\zeta_3]$-abelian
variety, then either it is actually simple, or it decomposes as
$A\twiddle B \cross B$.  In the latter case, $B$ does not admit an
$\integ[\zeta_3]$-action, and the signature of the
$\integ[\zeta_3]$-action on $A$ is $(\dim B, \dim B)$.  Moreover, if
$X_1$ and $X_2$ are abelian varieties with $\integ[\zeta_3]$-actions
of respective signatures $(r_1,s_1)$ and $(r_2,s_2)$, then the
signature of $X_1\cross X_2$ is $(r_1+r_2,s_1+s_2)$.  Consequently, if
$X$ is a $\integ[\zeta_3]$-abelian variety of signature $(4,1)$, then
it admits at most one factor which is simple as
$\integ[\zeta_3]$-abelian variety but not as an abelian variety; and
this factor must be the self-product of an elliptic curve.  This is
case (a).  Now suppose that $X$ has no such factor.  A simple complex abelian variety $B$ with
$\integ[\zeta_3]$ action of signature $(r,0)$ necessarily has $r =
\dim B = 1$ \cite[Prop.\ 14]{shimura63}.  Consequently, $X$ has at most
one simple factor of dimension greater than one.  The classification
asserted in the lemma now follows.
\end{proof}

We now take up the task of describing the possibilities for the
endomorphism algebra of a simple abelian variety with $\integ[\zeta_3]$-action.

If $K$ is a CM field, with maximal totally real subfield
$K_0$, let $\u_K$ be the norm-one torus
\[
\u_K = \ker( \res_{K/\rat}\gp_{m,K} \sra{\caln_{K/K_0}}
\res_{K_0/\rat}\gp_{m,K_0}).
\]
Let $(X,\lambda)$ be a simple polarized abelian variety, and let $W =
H^1(X,\rat)$.  The
polarization $\lambda$ induces an involution $(\dagger)$, the Rosati
involution, on $\End(X)_\rat$ and on $\End(W)$.  Suppose that $K$
is contained in the center $C(X)$ of $\End(X)_\rat$, and further
assume that 
$(\dagger)$ restricts to complex conjugation on $K$.  (Given $X$ and
$K$, such a polarization always exists \cite[Lemma 9.2]{kottwitz}.)
Define the group
\begin{align*}
\u_K(X,\lambda) &= \st{\alpha \in \End_K(W): \alpha \alpha^{(\dagger)}
  = 1}
\intertext{with Lie algebra}
\lie u_K(X,\lambda) &= \st{\alpha \in \End_K(W): \forall v,w\in W:
  \lambda(\alpha(v),w)+\lambda(v,\alpha(w))=0}.
\end{align*}
In the absence of a subscript, $\u(X,\lambda)$ will denote
$\u_{C(X)}(X,\lambda)$.  There is
an {\em a priori} inclusion $\liehodge(X) \subseteq \lie u(X,\lambda)$.

Further suppose that $C(X)$ contains $E$.  Then let $\det_{C(X)/E}$
denote the composition
\begin{diagram}
\u(X,\lambda) & \rto^{\det_{C(X)}} &  \u_{C(X)} &
\rto^{\caln_{C(X)/E}} &  \u_E.
\end{diagram}
(Note that we allow the possibility $C(X) = E$, in which case the second
map is an isomorphism; or $2\dim X = 2[C(X):\rat]$, in which case the
first map may be an isomorphism.)

\begin{lemma}
\label{lempossend}
Let $X/\cx$ be a simple abelian variety of dimension $g\le 5$ with an action by
$\integ[\zeta_3]$ of signature $(g -1, 1)$.  If $g=2$, then
$\End(X)_\rat$ is an indefinite quaternion algebra.  If $g\not = 2$, then
$\End(X)_\rat$ is the compositum of $\rat(\zeta_3)$ and a totally
real field of dimension dividing $g$.
\end{lemma}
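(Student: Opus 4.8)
The plan is to read off the endomorphism algebra from Albert's classification of simple abelian varieties, using the very unbalanced signature $(g-1,1)$ as the main constraint. Write $D = \End(X)_\rat$, a division algebra with positive Rosati involution $(\dagger)$, center $C$, and let $C_0 \subseteq C$ be the subfield fixed by $(\dagger)$. The relation $\ang{ax,y} = \ang{x,\bar a y}$ shows that $(\dagger)$ restricts to complex conjugation on the imaginary quadratic field $E = \rat(\zeta_3) \inject D$; hence $E \cap C_0 = \rat$, so $E$ is not contained in the totally real field $C_0$. This already excludes Albert Type I (where $D = C_0$ is totally real). First I would record the module bookkeeping: $W = H^1(X,\rat)$ is a free $D$-module, so $\dim_\rat D$ divides $2g$, while as an $E$-vector space $W$ has dimension $g$ and carries an $E$-Hermitian form $h$ whose signature, like the signature of the $\integ[\zeta_3]$-action on $\Lie(X)$, is $(g-1,1)$.

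The key mechanism is to compute, for each noncommutative possibility, the multiplicities of the two characters $\phi,\bar\phi$ of $E \tensor_\rat \real \iso \cx$ on $\Lie(X)$ by decomposing $D \tensor_\rat \real$ into archimedean factors. For a quaternion algebra with involution of the first kind (Types II and III, where $C = C_0$), every irreducible archimedean constituent of $\Lie(X)$ is two-dimensional and meets each $E$-character exactly once, forcing the balanced pair $(g/2,g/2)$; the same balancing occurs for a quaternion algebra over a CM center whose local factors split the two characters, and one finds both multiplicities even in the nonsplit subcase. For Type IV with reduced degree $d>1$ over a CM center containing $E$, the algebra acts through $M_d(\cx)$ at each archimedean place, so every $E$-multiplicity is divisible by $d$. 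In every noncommutative case the two multiplicities are therefore either equal or share a common divisor greater than one; since the minimal index of the signature $(g-1,1)$ equals $1$, this is possible only when $g=2$. Thus for $g \neq 2$ the algebra $D$ must be commutative.

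Granting commutativity for $g \neq 2$, I would conclude $D = C$ is a field, and since $(\dagger)$ is a positive involution acting nontrivially on $E \subseteq C$, the field $C$ is CM with maximal totally real subfield $C_0$. As $E$ is imaginary we get $E C_0 = C$, so $D$ is the compositum of $\rat(\zeta_3)$ with the totally real field $C_0$; viewing $W$ as a $C$-vector space gives $[C:\rat] \mid 2g$, i.e. $[C_0:\rat] \mid g$, exactly as claimed (the earlier Shimura input, which rules out a definite signature $(g,0)$, confirms that this CM shape is the only one that occurs).

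The genuine obstacle is the case $g = 2$, where $(1,1)$ is balanced and the multiplicity count no longer separates the candidates. Here I would argue through the Hodge group. The form $h$ on the two-dimensional $E$-space $W$ is indefinite, so $\su(W,h)$ is a $\rat$-form of $\SL_2$ with noncompact real points $\su(W,h)(\real) \iso \SL_2(\real)$, and $\liehodge(X) \subseteq \lie u(X,\lambda)$ places $\hg(X)^{\rm der}$ inside $\su(W,h)$. If $\hg(X)$ were a torus, then $X$ would have CM by a quartic CM field containing $E$; but any such field is biquadratic, and its CM types of $E$-signature $(1,1)$ are imprimitive, so $X$ would fail to be simple. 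Hence $\hg(X)^{\rm der} = \su(W,h)$, and because the standard representation of $\SL_2$ is self-dual, the commutant $D$ of $\hg(X)$ in $\End_\rat(W)$ is four-dimensional, i.e. a quaternion algebra over $\rat$; it is indefinite because the real commutant of $\SL_2(\real)$ on $W \tensor \real$ is $M_2(\real)$. The delicate points I expect are excluding the CM case by the imprimitivity bookkeeping and ruling out the definite quaternion algebra, whose compact norm-one group $\su(2)$ cannot embed in the noncompact $\su(W,h)(\real)$ precisely because the signature is the indefinite pair $(1,1)$.
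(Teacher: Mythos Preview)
Your multiplicity count for the noncommutative Albert types when $g\neq 2$ is correct and more uniform than the paper's argument, which defers the classification to the Moonen--Zarhin tables and only explicitly treats the rational-quaternion case at $g=4$ via the Hodge group (center of $D$ totally real $\Rightarrow$ $\liehodge$ semisimple $\Rightarrow$ $\liehodge\subseteq\lie{su}_E$ $\Rightarrow$ balanced signature). Your direct reading of the $E$-characters on each simple $D\tensor_\rat\cx$-constituent of $\Lie(X)$ handles Types II, III and the $d>1$ instances of Type IV in one stroke, though the clause about ``a quaternion algebra over a CM center'' could use another sentence: when $E$ is not central you need that the two extensions of each complex place of $C$ to the field $EC\subset D$ restrict to the two distinct characters of $E$.

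The $g=2$ argument, however, has a genuine gap. Having excluded the torus case and deduced $\hg(X)^{\rm der}=\su(W,h)$, you assert that the commutant of $\hg(X)$ is four-dimensional; but nothing yet rules out $\hg(X)=\u(W,h)$, whose commutant in $\End_\rat(W)$ is only $E$ --- and then $X$ would be a simple surface with $\End(X)_\rat=E$, precisely the possibility the lemma must exclude. Self-duality of the standard $\SL_2$-representation computes the commutant of $\hg(X)^{\rm der}$, not of $\hg(X)$. The missing (short) step is that the \emph{balanced} signature $(1,1)$ already forces $h$ to land in $\su$: on the $\iota$-eigenspace of $W\tensor\cx$ the cocharacter $h(z)$, for $z$ on the unit circle, has eigenvalues $z$ and $z^{-1}$ with equal multiplicities $r=s=1$, so its $E$-linear determinant is $z^{r-s}=1$. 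Hence $\hg(X)\subseteq\su(W,h)$ outright, not merely $\hg(X)^{\rm der}\subseteq\su(W,h)$; combined with your derived-group computation this gives $\hg(X)=\su(W,h)\iso\SL_1(B)$, and now the commutant really is the indefinite quaternion algebra $B$. This is exactly the converse of the implication the paper runs for $g=4$.
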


\begin{proof}
This can be deduced from the usual classification of endomorphism
algebras of abelian varieties (see \cite[Sec.\
2]{moonenzarhinfivefolds}) and the following argument, which is
adapted from  \cite[2.9]{moonenzarhinfourfolds} and shows that
$\End(X)_\rat$ is not a quaternion algebra if $g=4$.  Let $(Y,\lambda)/\cx$ be a simple
polarized abelian variety with $\End(Y)_\rat \iso D$, a quaternion algebra over
$\rat$; then $\dim Y = 2d$ is even.  Now suppose that $\rat(\zeta_3)$
is a subalgebra of $D$ stable under the Rosati involution.  We will
show that the signature of the action 
of $\rat(\zeta_3)$ on $Y$ is $(d,d)$.

As noted above, there is an {\em a priori} inclusion
$\liehodge(Y) \subseteq \lie u_E(Y,\lambda)$.  Moreover, since the center of $D$ is
totally real, $\liehodge(Y)$ is
semisimple \cite[Lemma 1.4]{tankeev79} and we have $\liehodge(Y) \subseteq \lie{su}_E(Y,\lambda)$.  Consequently, $Y$
has signature $(d,d)$ \cite[Lemma 2.8]{moonenzarhinfourfolds}.
\end{proof}

We come to the main result of this section.

\begin{proposition}
\label{prophg}
If $X$ is a complex abelian fivefold with action by $\integ[\zeta_3]$
of signature $(4,1)$, then either:
\begin{alphabetize}
\item $X$ is isogenous to $X_0^{3} \cross X_1^{ 2}$, where
  $X_1$ is an elliptic curve which is not isogenous to $X_0$, 
  and $\hg(X) \iso \u_E \cross \hg(X_1)$; or
\item $X$ is isogenous to $X_0^3 \cross X_2$, where $X_2$ is a simple
  abelian surface.  Then  $D := \End(X)_\rat$ is an indefinite
  quaternion algebra,
\begin{align*}
\hg(X_2) & \iso \st{ x \in (D^{{\rm opp}})\units: x x^{(\dagger)}=1}
\intertext{and}
\hg(X) & \iso \u_E \cross \hg(X_2);
\end{align*}
or
\item $X$ is isogenous to $X_0^{(5-n)}\cross X_n$, where $1 \le n \le 5$
 and $X_n$ is a simple abelian variety of dimension $n$
  with an action by $\integ[\zeta_3]$ of signature $(n-1,1)$. Then
  $\End(X_n)_\rat$ is a CM field which is the compositum of $E$ and a 
  totally real field.  Let
  $\lambda_n$ be a polarization on $X_n$ for which the associated
  Rosati involution induces complex conjugation on $\End(X_n)_\rat$.  Then 
\[
\hg(X_n) \iso \u(X_n,\lambda_n).
\]
\begin{enumerate}
\def\theenumii{\roman{enumii}}
\item If $n = 1$, then
\[
\hg(X) \iso \u_E.
\]
\item If $n = 3$, then
\[
\hg(X) \iso \st{(u,v) \in \u_E \cross \u(X_n,\lambda_n) : u \cdot
  \det_{C(X)/E}(v) = 1}. 
\]
\item If $n = 4$, then
\[
\hg(X) \iso \st{(u,v) \in \u_E \cross \u(X_n,\lambda_n) : u^2 \cdot
  \det_{C(X)/E}(v)=1}.
\]
\item If $n = 5$, then $X = X_n$ and thus
\[
\hg(X) \iso \u(X,\lambda).
\]
\end{enumerate}
\end{alphabetize}
\end{proposition}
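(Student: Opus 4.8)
The plan is to compute $\hg(X)$ by first treating each isogeny factor separately and then assembling the pieces, using throughout the a priori containment $\hg(X) \subseteq \u(X,\lambda)$ recorded above together with the generic computation $\mt^\gen(\ww) \iso \gu_\rat$ of Lemma \ref{lemgenmt}. By Lemma \ref{lemclassisog} every such $X$ is isogenous to a product of powers of pairwise non-isogenous simple factors, and since the Hodge group is an isogeny invariant we may compute with this product. Two general facts organize the assembly: for a power, $\hg(X_i^{m_i}) \iso \hg(X_i)$ embedded diagonally; and for a product of non-isogenous simple factors, $\hg(\prod_i X_i^{m_i})$ is a subgroup of $\prod_i \hg(X_i)$ surjecting onto each factor, so by Goursat's lemma it is cut out by identifying a common quotient of the factor Hodge groups. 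The computation thus reduces to (i) identifying $\hg(X_i)$ for each simple factor and (ii) determining the Goursat gluing.

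For the simple factors I would argue as follows. The elliptic curve $X_0$ has complex multiplication by $\calo_E$, so $\hg(X_0) \iso \u_E$. In case (a) the factor $X_1$ admits no $E$-action, since it arises from a factor that is $\integ[\zeta_3]$-simple but not simple, so $\hg(X_1)$ is either $\SL_{2,\rat}$ or the norm-one torus of an imaginary quadratic field different from $E$; I leave $\hg(X_1)$ unspecified. For the surface $X_2$ of case (b), Lemma \ref{lempossend} identifies $D = \End(X_2)_\rat$ as an indefinite quaternion algebra, and the standard Hodge theory of abelian surfaces with quaternionic multiplication identifies $\hg(X_2)$ with the norm-one group $\st{x \in (D^{\rm opp})\units : x x^{(\dagger)} = 1}$, the a priori inclusion being an equality by a dimension count against the generic Hodge group. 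For the factors $X_n$ of case (c), Lemma \ref{lempossend} shows $\End(X_n)_\rat$ is a CM field $K = E\cdot K_0$, so that $\u(X_n,\lambda_n) = \u_K$; the inclusion $\hg(X_n) \subseteq \u(X_n,\lambda_n)$ is upgraded to equality by combining the generic Mumford--Tate group over the sub-Shimura datum cut out by the $K$-action with the invariance of $\mt$ under isogeny, so that the generic behaviour is achieved at every point of the stratum. In the fully degenerate case where $X_n$ has complex multiplication, this reduces to the observation that the relevant CM type is non-degenerate, so that $\hg(X_n)$ is the entire norm-one torus $\u_K$.

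The heart of the argument is the Goursat gluing in case (c), which produces the determinant conditions. Both $X_0^{5-n}$ and $X_n$ carry the $E$-action, so their Hodge groups share the common quotient $\u_E$: on the $X_0$-side via the identity of $\u_E$, and on the $X_n$-side via $\det_{C(X)/E} = \caln_{C(X)/E}\comp\det_{C(X)}$, which computes the $E$-determinant of the action on the $n$-dimensional $E$-space $H^1(X_n,\rat)$. The gluing character is pinned down by evaluating it on the Hodge cocharacter $\mu_X$: the projection of $\mu_X$ to the $X_0$-factor is the degree-one generator of $X_*(\u_E)$, while its image under $\det_{C(X)/E}$ on the $X_n$-factor has degree $(n-1)-1 = n-2$, as dictated by the signature $(n-1,1)$. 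Requiring these to agree in the common quotient forces the relation $u^{\,n-2}\cdot\det_{C(X)/E}(v) = 1$; this specializes to $u\cdot\det(v)=1$ when $n=3$, to $u^2\cdot\det(v)=1$ when $n=4$, to the diagonal $\u_E$ when $n=1$, and to no condition at all (so $\hg(X)\iso\u(X,\lambda)$) when $n=5$. In cases (a) and (b) the factor $\hg(X_1)$ or $\hg(X_2)$ has no $\u_E$-quotient compatible with $\mu_X$, being either semisimple or the norm-one torus of an independent field, so the common quotient is trivial and $\hg(X)$ is the full product $\u_E\cross\hg(X_i)$.

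The main obstacle I anticipate is step (ii) together with the gluing: establishing that each $\hg(X_n)$ is the \emph{entire} group $\u(X_n,\lambda_n)$ rather than a proper subgroup, and that the gluing character carries exactly the power $n-2$. The former is delicate in the CM case, where one must verify non-degeneracy of the CM type so that no further collapse of the torus occurs; the latter is a bookkeeping computation with Hodge cocharacters and signatures, where the temptation to read off the power from the multiplicity $5-n$ of $X_0$ (rather than from the signature of $X_n$) must be resisted, since the diagonal Hodge group $\hg(X_0^{5-n})\iso\u_E$ carries the cocharacter of $X_0$ and not its determinant. Both points are controlled by the explicit description of the signature of the $E$-action, together with the fact, guaranteed by Lemma \ref{lemgenmt} and the density of Hecke translates, that the Mumford--Tate group of a $\integ[\zeta_3]$-abelian fivefold is determined by its endomorphism algebra.
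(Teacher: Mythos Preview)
Your overall architecture matches the paper's: reduce via Lemma \ref{lemclassisog} to a product of powers of pairwise non-isogenous simple factors, compute $\hg$ of each factor, then assemble. The key difference is in how the hard steps are handled. The paper does \emph{not} carry out your steps (i) and (ii) directly; it simply invokes the Moonen--Zarhin classification, both for the individual Hodge groups $\hg(X_n)$ (\cite[Sec.\ 2]{moonenzarhinfivefolds} for $n\in\st{2,3,5}$ and \cite[7.4--7.6]{moonenzarhinfourfolds} for $n=4$) and for the products $\hg(X_0\times X_n)$ (\cite[3.2(ii), 5.3, 5.11]{moonenzarhinfivefolds}). The $n=1$ case is dispatched by noting that $X_1$ is then isogenous to $X_0$ with the conjugate $\calo_E$-action, so $\hg(X)\iso\hg(X_0^5)\iso\hg(X_0)$.

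Your attempt to replace those citations by a direct argument has a genuine gap, and it lies exactly where you anticipate it. The justification offered in your final paragraph---that Lemma \ref{lemgenmt} together with Hecke density implies the Mumford--Tate group is determined by the endomorphism algebra---is a non-sequitur: Lemma \ref{lemgenmt} computes $\mt$ at a \emph{generic} point, and density of Hecke orbits says nothing about $\mt$ at a Hodge-special point. Likewise, your phrase ``the generic behaviour is achieved at every point of the stratum'' is false as stated: a point with $\End(X_n)_\rat$ exactly equal to $K$ need not be Hodge-generic on the PEL substack for $K$ (Mumford's abelian fourfolds with $\End_\rat=\rat$ but $\hg\subsetneq\sp_8$ are the model counterexample to this pattern of reasoning). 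That no such exotic behaviour occurs in dimension $\le 5$ is precisely the content of the Moonen--Zarhin papers, and nothing in your sketch reproduces their case analysis. The same issue recurs in the CM subcase, where non-degeneracy of the CM type is asserted but not checked, and in the Goursat step, where the cocharacter computation constrains the gluing relation but does not by itself exclude a further one---for that you again need to know $\hg(X_n)$ exactly.
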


\begin{remark}
In part (c), a suitable choice of polarization $\lambda_n$ always
exists \cite[Lemma 
9.2]{kottwitz}.  By definition, the abstract isomorphism class of
$\hg(X_n)$ is independent of the choice of polarization.
\end{remark}

\begin{proof}
The Hodge group of an elliptic curve is either $\SL_2$ or a norm-one
torus $\u_K$ for some quadratic imaginary field $K$.  In particular,
$\hg(X_0) \iso \u_E$.

Suppose $Y_1, \cdots, Y_m$ are abelian varieties with $\hom(Y_i,Y_j) =
\st 0$, and $e_1, \cdots, e_m$ are natural numbers.  Then
$\hg(Y_1^{ e_1} \cross \cdots \cross Y_m^{ e_m}) \iso
\hg(Y_1 \cross \cdots \cross Y_m)$.  Recall the coarse classification
established in Lemma \ref{lemclassisog}.  In case (a) of that lemma, we see that $\hg(X) \iso
\hg(X_0\cross X_1)$.  Moreover, since
$\hom(X_0,X_1) = \st 0$, $\hg(X_0\cross X_1) \iso \hg(X_0)\cross
\hg(X_1)$.

Now suppose that $X$ falls in case (b) of Lemma \ref{lemclassisog}.
The assertions about $\End(X_n)$ follow from Lemma \ref{lempossend},
while information about $\hg(X_n)$ can be read off from \cite[Sec.\
2]{moonenzarhinfivefolds} (for $n = 2$, $3$ and $5$) and \cite[7.4(i),
7.5(i), 7.6(i)]{moonenzarhinfourfolds} (for $n=4$).

If $n \le 4$, then $\hg(X) \iso \hg(X_0 \cross X_n)$.

If $n=1$, $X_1$ is in fact isogenous to $X_0$ (but
equipped with the conjugate action $\bar\iota$ of $\calo_E$).
Consequently, $\hg(X_0^{ 4} \cross X_1) \iso \hg(X_0^{ 5})
\iso \hg(X_0)$.

If $n=2$, then $\hg(X) \iso \hg(X_0) \cross \hg(X_n)$ (\cite[3.2(ii)]{moonenzarhinfivefolds}.

If $n=3$ or $n=4$, then $\hg(X_0 \cross
X_n)$ is calculated in \cite[5.3]{moonenzarhinfivefolds}  and 
\cite[5.11]{moonenzarhinfivefolds}, respectively.

If $n = 5$, then $\hg(X) = \hg(X_n)$ has already been calculated.
\end{proof}

\begin{remark}
We have phrased our answer in terms of Hodge groups, simply because
one often has $\hg(X\cross Y) \iso \hg(X)\cross \hg(Y)$, while this is
never true for Mumford-Tate groups.  Still, it is no more difficult to
calculate $\mt(X)$.
Recall that
there is a $\rat$-linear determinant $\det_\rat: \mt(X) \ra \gp_m$, whose
kernel is precisely $\hg(X)$.  For example, $\mt(X_0)$ is the torus
$\res_{E/\rat} \gp_m$, and in this case $\det_\rat$ coincides with the
$E/\rat$-norm. In case (a) of Proposition \ref{prophg}, if $X_1$ has endomorphism ring $\integ$,
then $\mt(X_1) \iso \gl_2$, and
\[
\mt(X) \iso \st{(\alpha,\beta) \in E\units \cross \gl_2 :
  \det_\rat(\alpha) = \det_\rat(\beta)},
\]
while if $\End(X_2) \iso K$ an imaginary quadratic field, then
\begin{align*}
\mt(X) &\iso \st{(\alpha,\beta) \in E\units \cross K\units:
  \det_\rat(\alpha) = \det(\beta)} \\
&= \st{(\alpha,\beta) \in E\units \cross K\units:
  \caln_{E/\rat}(\alpha) = \caln_{K/\rat}(\beta)}.
\end{align*}
\end{remark}

\subsection{Endomorphism rings}
\label{subsecendring}

Lemma \ref{lemclassisog} implies that an abelian fivefold with
$\integ[\zeta_3]$-action of signature $(4,1)$ has at most one simple isogeny factor of
dimension $n>1$.  Lemma \ref{lempossend} shows that (if $n\not = 2$) then the endomorphism
algebra of such a factor is the compositum of $\rat(\zeta_3)$ and
$K_0$, a totally real field of degree $m$, where $m|n$.  The goal of
the present section is to show (Proposition \ref{propshape}) that every combination of $m$ and $n$
allowed by Lemmas \ref{lemclassisog} and \ref{lempossend} occurs for
the abelian variety attached to some cubic surface.

The crux is to construct principally polarized abelian varieties with
action by $\integ[\zeta_3]$, as opposed to a smaller order.  (The fact
that $\stk N = \tau(\stk S)$ is open in $\stk M$ then implies that, possibly after an
isogeny which still preserves the $\integ[\zeta_3]$-action and
principal polarization, such an abelian variety is attached to a
smooth cubic surface.)  As noted in the introduction, in general it is
delicate to adjust a polarized abelian variety in its isogeny class in
order to simultaneously enlarge its endomorphism ring and induce a 
principal polarization.  See \cite{wilsonrm} for a discussion of this problem.

In fact, it is no harder to work in a somewhat more general context
than that narrowly required by Lemma \ref{lempossend}.  We thus
introduce the following objects and notation, which will be in force
throughout the present subsection.  Let $K_0$ be a totally real field
of degree $[K_0:\rat] = m$.  To make progress on the task at hand, we
will assume:
\begin{equation}
\label{thecond}
\parbox{4in}{$K_0$ is a totally real field such that the inverse different $\cald_{K_0/\rat}\inv$ is a square in the strict class group $\cl^+(K_0)$ of $K_0$.}
\end{equation}
Note that this hypothesis is satisfied if the strict class group of
$K_0$ has odd order, and in particular if it is trivial; a complete
characterization of \eqref{thecond} may be found in \cite[Prop.\ 3.1]{grosssigns}.
Fix an ordering of the distinct
embeddings $\sigma_1, \cdots, \sigma_m: K_0\inject \real$.

Now let $E$ be a quadratic imaginary field; fix an embedding
$\iota:E\inject \cx$, and let $K$ be the compositum $K = K_0E$.  All
polarizations of abelian varieties with action by $\calo_K$ considered
in this section will be compatible with that action, in the sense that
the Rosati involution induces complex conjugation on $\calo_K$.

A CM type (for $K$) is a set $\Sigma$ of embeddings $\phi:K \ra \cx$
such that $\phi \in \Sigma$ if and only if $\bar\phi\not\in\Sigma$,
where $\bar\cdot$ denotes complex conjugation.  There is a bijection
between subsets $S\subset\st{1, \cdots, m}$ and CM types for $K$.
Indeed, let $\tau_i:K\inject \cx$ be the embedding such that
$\tau_i\rest E= \iota$ and $\tau_i\rest{K_0} = \sigma_i$.  Then define
a CM type $\Sigma(S)$ by declaring that $\tau_i\in\Sigma(S)$ if and
only if $i\in S$.

Note that if $X$ is an abelian variety with CM by $K$ of type
$\Sigma(S)$, then it supports an $E$-action of signature $(\#S, m -
\#S)$.

\begin{lemma}
\label{lembuildcm}
 Given $S\subset \st{1, \cdots, m}$, there is a
  principally polarized abelian variety $X$ of dimension $m$ equipped
  with an action by $\calo_K$, compatible with the Rosati involution,
  such that $X$ has type $\Sigma(S)$.
\end{lemma}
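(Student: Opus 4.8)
The plan is to realize $X$ analytically as $\cx^m/\Phi_\Sigma(\mathfrak a)$, where $\Sigma = \Sigma(S)$ and $\mathfrak a$ is a suitable fractional $\calo_K$-ideal, and then to produce a compatible principal polarization from a totally imaginary element $\xi \in K$. Concretely, fix the CM type $\Sigma = \Sigma(S)$ and let $\Phi_\Sigma: K \inject \cx^m$ be $x \mapsto (\phi(x))_{\phi\in\Sigma}$. For any fractional $\calo_K$-ideal $\mathfrak a$, the image $\Phi_\Sigma(\mathfrak a)$ is a lattice, the quotient $A_{\mathfrak a} := \cx^m/\Phi_\Sigma(\mathfrak a)$ is a complex torus of dimension $m$, and multiplication in $K$ equips it with an action of $\calo_K$ for which the induced $E$-action has signature $(\#S, m-\#S)$, as desired. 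It then remains to choose $\mathfrak a$ and to exhibit a principal polarization whose Rosati involution induces complex conjugation on $\calo_K$.

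Polarizations compatible with the CM structure arise from elements $\xi\in K$ with $\bar\xi = -\xi$: the form $E_\xi(x,y) = \tr_{K/\rat}(\xi x\bar y)$ is $\rat$-bilinear and alternating, and it is a Riemann form for $A_{\mathfrak a}$ precisely when $\mathrm{Im}(\phi(\xi))>0$ for every $\phi\in\Sigma$; the Rosati involution attached to $E_\xi$ then restricts to complex conjugation on $K$. Such a polarization is \emph{principal} exactly when $E_\xi$ is unimodular on $\mathfrak a$, i.e.\ when $\mathfrak a$ is self-dual for $E_\xi$. Since the dual of $\calo_K$ for the trace form is the inverse different, unimodularity translates into the ideal identity
\[
(\xi)\,\mathfrak a\,\bar{\mathfrak a} = \cald_{K/\rat}\inv .
\]
Thus the lemma reduces to finding a fractional $\calo_K$-ideal $\mathfrak a$ and a totally imaginary, $\Sigma$-positive $\xi$ satisfying this identity.

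The key reduction is that both $\mathfrak a\,\bar{\mathfrak a}$ and $\cald_{K/\rat}$ are stable under complex conjugation, so the problem descends to $K_0$. Writing $\mathfrak a\,\bar{\mathfrak a} = \caln_{K/K_0}(\mathfrak a)\calo_K$ and using the tower formula $\cald_{K/\rat} = \cald_{K/K_0}\cdot\cald_{K_0/\rat}\calo_K$ (the relative different $\cald_{K/K_0}$ being controlled by the imaginary quadratic extension $K/K_0$), solving the displayed identity amounts to solving the corresponding identity for ideals of $K_0$ together with a choice of totally imaginary generator. The obstruction to this lives in the class group of $K_0$; but because the positivity conditions $\mathrm{Im}(\phi(\xi))>0$ for $\phi\in\Sigma(S)$ are sign conditions at the real places of $K_0$, it is really the \emph{strict} class group $\cl^+(K_0)$ that governs solvability. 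Hypothesis \eqref{thecond} — that $\cald_{K_0/\rat}\inv$ is a square in $\cl^+(K_0)$ — is exactly what lets us write $\cald_{K_0/\rat}\inv = \mathfrak c^2$ up to a totally positive principal ideal, so that taking $\mathfrak a = \mathfrak c\,\calo_K$ (suitably corrected at the primes of $K$ ramified over $K_0$) makes the residual ideal principal with a generator that can be scaled to be totally imaginary and $\Sigma$-positive.

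The main obstacle is precisely this last step: arranging a single $\xi$ that is simultaneously a generator of the prescribed conjugation-stable ideal, totally imaginary, and of the correct sign at each of the $m$ real places so as to match the CM type $\Sigma(S)$. The freedom to adjust $\xi$ by totally positive units of $K_0$ together with the squareness condition \eqref{thecond} clears the strict-class-group obstruction, while the independence of the sign choices across the embeddings $\sigma_1,\dots,\sigma_m$ lets us hit the exact subset $S$. Once $\mathfrak a$ and $\xi$ are in hand, the Riemann form $E_\xi$ defines the required principal polarization and the construction is complete.
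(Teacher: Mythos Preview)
Your approach---realizing $X$ directly as $\cx^m/\Phi_\Sigma(\mathfrak a)$ for a fractional $\calo_K$-ideal and polarizing via a totally imaginary $\xi\in K$ satisfying $(\xi)\,\mathfrak a\,\bar{\mathfrak a}=\cald_{K/\rat}\inv$---is the standard Shimura--Taniyama CM construction and is a genuinely different route from the paper's. The paper instead works inside a Hilbert--Blumenthal family over $\hp^m$: it chooses a fractional $\calo_{K_0}$-ideal $\ida$ with $\ida^2$ in the strict class of $\cald_{K_0}\inv$ (this is exactly where \eqref{thecond} enters), obtains the principal polarization from a totally positive $\gamma\in K_0$ via the usual Riemann form $H_{\gamma,\underline z}$, and then specializes to the diagonal CM point $\underline z^*=(z_0,\dots,z_0)$ where $\integ\oplus\integ z_0$ is $\calo_E$-stable. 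The payoff of that detour is that the polarization is manufactured entirely from the $\calo_{K_0}$-data and is insensitive to how $\calo_E$ acts coordinatewise; the CM type $\Sigma(S)$ is then obtained simply by letting $K$ act on the $i$th factor through $\phi_i$ (so through $\iota$ or $\bar\iota$ on the $E$-part according as $i\in S$ or not), and no further ideal-theoretic work is needed to hit an arbitrary $S$.

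Your last paragraph, by contrast, has a real gap precisely at this step. Writing $\xi=\beta\delta$ with $\delta\in E$ purely imaginary and $\beta\in K_0^\times$, the $\Sigma(S)$-positivity condition pins down the sign of $\sigma_i(\beta)$ for every $i$. But once you have used \eqref{thecond} to solve the ideal equation with $\mathfrak a=\mathfrak c\,\calo_K$, the resulting $\beta$ is forced to be a unit times a \emph{totally positive} element---so it realizes one specific sign pattern, hence one specific $S$. Adjusting by totally positive units of $K_0$, which is the only freedom you invoke, does not change a single sign; and \eqref{thecond} by itself does not guarantee that $\calo_{K_0}^\times$ surjects onto $\{\pm1\}^m$. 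To reach a different $S$ you must multiply $\beta$ by an element of mixed signs, which alters $(\xi)$ and forces a compensating change in $\mathfrak a$; the new equation then asks that a given principal $K_0$-ideal extend to a norm $\mathfrak d\bar{\mathfrak d}$ in $K$, which fails at primes inert in $K/K_0$. This is repairable (choose $\beta$ by weak approximation to have the prescribed signs at infinity and to be a unit at every inert prime), but ``independence of the sign choices across the embeddings'' is not an argument, and the vague ``suitably corrected at the ramified primes'' does not address it. The paper's factor-by-factor twist of the $\calo_E$-action is exactly the device that lets it bypass this bookkeeping.
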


\begin{proof}
Record the CM type $\Sigma(S)$ as the ordered tuple $\phi_1, \cdots,
\phi_m:K \inject \cx$, where $\phi_i\rest{K_0} = \sigma_i$.  Then $K$
acts on $\cx^m$ by
\[
a \cdot(t_1, \cdots, t_m) = (\phi_1(a)t_1, \cdots, \phi_m(a)t_m).
\]

Recall the following classical construction of analytic families of
Hilbert-Blumenthal abelian varieties (e.g., \cite[Sec.\
2.2]{gorenhbbook}).  Fix fractional ideals $\ida$ and $\idb$ of $K_0$.
Given a point $\underline z = (z_1, \cdots, z_m)$ in the $m$-fold product of the
upper half-plane $\hp^m$, one defines
a lattice $\Lambda_{\underline z} \subset \cx^m$ by
\begin{align*}
\Lambda_{\underline z} &= \st{ \alpha\cdot \underline z + \beta\cdot \underline 1 : \alpha \in \ida, \beta\in
  \idb} \\
& := \st{(\sigma_1(\alpha)z_1+\sigma_1(\beta), \cdots, \sigma_m(\alpha)z_m+\sigma_m(\beta))
  : \alpha \in \ida, \beta \in \idb}.
\end{align*}
Then $X_{\underline z} = \cx^m/\Lambda_{\underline z}$ is a complex analytic torus with an action
by $\calo_{K_0}$.
For a fixed nonzero $\gamma\in (\cald_{K_0}\ida\idb)\inv \subset K_0$, define a pairing
\begin{diagram}
(\ida \oplus \idb) \cross (\ida \oplus \idb) & \rto^{E_\gamma} & \rat \\
(\alpha_1,\beta_1) \cross(\alpha_2,\beta_2) & \rmto &
\tr_{K_0/\rat}(\gamma(\alpha_1\beta_2-\alpha_2\beta_1)).
\end{diagram}
Then $E_\gamma$ takes on values in $\integ$, and is the imaginary part
of the
Riemann form
\[
H_{\gamma,\underline z}((x_1, \cdots, x_m),(y_1, \cdots, y_m)) = \sum_{1 \le i \le m}
\frac{x_i\bar y_i \sigma_i(\gamma)}{\Im(z_i)},
\]
which is (the first Chern class of) an $\calo_{K_0}$-linear
polarization $\lambda_\gamma$ of $X_{\underline z}$.  If one further
assumes that $\ida \idb = \cald_{K_0}\inv$ in $\cl(K_0)^+$, then
$\gamma$ may be chosen so that $\lambda_\gamma$ is principal.

We now specialize this construction to produce the desired $X$.  Let
$\ida = \idb$ be a fractional ideal in $K_0$ such that $\ida \cdot
\ida$ is in the strict ideal class of $\cald_{K_0}\inv$, and choose a
$\gamma$ so that $H_{\gamma, \underline z}$ is a principal polarization
on $\cx^m/\Lambda_{\underline z}$ for each $\underline z\in \hp^m$.

Fix a lattice $\integ \cdot 1 \oplus \integ \cdot z_0\subset \cx$ which is stable
under $\iota(\calo_E)$.  Let $\underline z^* =
(z_0, \cdots, z_0)$, and let $X = \cx^m/\Lambda_{\underline z^*}$.
Then $H_{\gamma,\underline z^*}$ defines a principal polarization, and
in particular $X$ is an abelian variety with an action by
$\calo_{K_0}$.

In fact, $X$ also supports an action by $\calo_E$.  To see this,
suppose $\epsilon \in \calo_E$.  There are integers $a$, $b$, $c$ and
$d$ such that 
\begin{align*}
\epsilon \cdot 1 &= az_0+b \\
\epsilon \cdot z_0 &= cz_0+d.
\end{align*}
Given a lattice element $\alpha \cdot \underline z^* + \beta \cdot
\underline z^* \in \Lambda_{\underline z^*}$, we then have
\begin{align*}
\epsilon \cdot (\alpha \cdot \underline z^* + \beta \cdot \underline
z^*) &=
\alpha \cdot (c \underline z^* + d\underline 1) + \beta \cdot
(a\underline z^*+b\underline 1) \\
&= (c\alpha + a\beta)\underline z^* + (d\alpha + b\beta)\cdot \underline
1 \\
&\in \Lambda_{z^*}
\end{align*}
since $\alpha,\beta \in \ida$.

Moreover, the CM type of the action of $K$ is $\Sigma(S)$.  (This is essentially worked out in \cite[Sec.\
22]{mumfordabvar}, albeit with a different, though commensurable,
choice of lattice since the degree of the polarization is not
germane there.)
\end{proof}

Now consider an abelian variety $Y$ of arbitrary dimension, equipped with an action
by $\calo_K$ such that $1\in \calo_K$ acts as $\id_Y$.  The signature
of this action -- i.e., the isomorphism class of $\Lie(Y)$ as
$\calo_K\tensor\cx$-module -- is described by nonnegative integers
$r_i$ and $s_i$ for $1 \le i \le m$.  Here, $r_i$ is the dimension of
the $\tau_i$-eigenspace of $\Lie(Y)$, and $s_i$ is the dimension of
the $\bar\tau_i$-eigenspace.  As an abelian
variety with action by $\calo_E$, $Y$ has signature $(\sum_{1 \le i
  \le m} r_i, \sum_{1 \le i \le m} s_i)$.

\begin{lemma}
\label{lembuildoksimple}
Let $S^{(1)}, \cdots, S^{(n)}$ be subsets of $\st{1, \cdots, m}$.
\begin{alphabetize}
\item There is a principally polarized abelian variety $Y$ of dimension
$mn$, equipped with an action by $\calo_K$ of signature $(r_1, \cdots,
r_m; s_1, \cdots, s_m)$, where
\begin{equation}
\label{eqcmsig}
\begin{array}{rl}
 r_i &= \#\st{j : i \in S^{(j)}} \\
 s_i &= \#\st{j :i \not \in S^{(j)}}.
\end{array}
\end{equation}

\item Suppose there are indices $1 \le i, j \le m$ such that $r_is_i\not = 0$
  and $r_js_j\not = 1$.  Then there exists a $Y$ as in (a) such that 
$\End(Y) \iso \calo_K$.  In particular, such a $Y$ is simple.
\end{alphabetize}
\end{lemma}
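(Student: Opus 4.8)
The plan is to build (a) as a product of the CM abelian varieties supplied by Lemma~\ref{lembuildcm}, and to obtain (b) by passing to a sufficiently general point of the moduli space of such abelian varieties; the only substantive work is to rule out a spurious quaternionic endomorphism algebra. For (a), apply Lemma~\ref{lembuildcm} to each $S^{(j)}$ to produce a principally polarized abelian variety $(X_j,\iota_j,\lambda_j)$ of dimension $m$ with $\calo_K$-action of CM type $\Sigma(S^{(j)})$, and set $Y_0 = X_1\cross\cdots\cross X_n$ with the diagonal $\calo_K$-action and product (hence again principal) polarization. Then $\dim Y_0 = mn$ and $\Lie(Y_0)\iso\bigoplus_j\Lie(X_j)$ as $\calo_K\tensor\cx$-modules; since the $\tau_i$-eigenspace of $\Lie(X_j)$ is one-dimensional exactly when $\tau_i\in\Sigma(S^{(j)})$, i.e.\ when $i\in S^{(j)}$, and is zero otherwise, summing over $j$ gives $r_i=\#\st{j:i\in S^{(j)}}$ and $s_i=\#\st{j:i\notin S^{(j)}}$, which is the signature \eqref{eqcmsig}.

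For (b), the principally polarized abelian varieties of dimension $mn$ with $\calo_K$-action of signature $(r_1,\dots,r_m;s_1,\dots,s_m)$ are parametrized by a PEL Shimura variety attached to the unitary similitude group $G=\gu(W)$ of the Hermitian $K/K_0$-space $W = H^1(Y_0,\rat)$, which carries the induced $K$-action and the form coming from the polarization; here $\dim_K W = n$ and $G$ has the prescribed signature. By (a) this space is nonempty; fix a connected component $\mathcal C$ containing the point of (a). Its complex dimension is $\sum_i r_is_i$, which is positive because $r_is_i\not=0$ for some $i$. Away from a countable union of proper closed subvarieties of $\mathcal C$, the endomorphism algebra $\End(Y)_\rat$ of a member $Y$ is the commutant in $\End_\rat(W)$ of the generic Mumford--Tate group $\mt^\gen$; and, as in the proof of Lemma~\ref{lemgenmt} (using \cite[Prop.\ 2]{andrefixedpart} together with the density of the CM points produced in (a)), the derived group of $\mt^\gen$ equals the connected monodromy group, which for a PEL Shimura variety of this type is the full derived group $G^{\mathrm{der}}=\su(W)$, since the monodromy is an arithmetic, hence Zariski-dense, subgroup of $G^{\mathrm{der}}$.

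The crux, and the step I expect to be the main obstacle, is to show that this commutant is exactly $K$. Because $\calo_K$ acts, $\End(Y)_\rat\supseteq K$; and because $\mt^\gen\supseteq\su(W)$, the commutant is contained in the commutant of $\su(W)$ in $\End_\rat(W)$. When $n\geq 3$ the latter is $K$, as $\su(W)$ is then a special unitary group of absolute rank $>1$ acting through its standard representation (the exceptional isogeny $\su_2\iso\SL_1(B)$ occurs only for $n=2$, and for $n\geq 3$ any place with $r_is_i\not=0$ in fact has $r_is_i\geq 2$); hence $\End(Y)_\rat=K$. When $n=2$, instead $\su(W)\iso\SL_1(B)$ for a quaternion algebra $B/K_0$ split by $K$, whose commutant is $B\supseteq K$; since $K$ is a maximal subfield of $B$, the algebra $\End(Y)_\rat$ is forced to be either $K$ or $B$. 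By Albert's classification \cite[Sec.\ 2]{moonenzarhinfivefolds} a quaternion algebra over the totally real field $K_0$ occurs as an endomorphism algebra only if it is totally definite or totally indefinite, and a signature analysis as in Lemma~\ref{lempossend} identifies the totally indefinite case with signature $(1,1)$ at every place. For $n=2$ the hypotheses furnish a place with $r_is_i\not=0$ (signature $(1,1)$) and a place with $r_js_j\not=1$ (signature $(2,0)$ or $(0,2)$), so $B$ is neither totally indefinite nor totally definite; hence $B$ cannot be an endomorphism algebra and the generic $Y\in\mathcal C$ has $\End(Y)_\rat=K$. Finally $\calo_K\subseteq\End(Y)\subseteq K$ with $\calo_K$ the maximal order gives $\End(Y)\iso\calo_K$, and because $\End(Y)_\rat$ is a field, $Y$ is simple.
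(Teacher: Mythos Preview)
Your argument for (a) is exactly the paper's: take the product of the CM abelian varieties from Lemma~\ref{lembuildcm} with the product principal polarization and read off the signature.

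For (b) you take a genuinely different route. The paper simply feeds $H_1(Y_0,\integ)$ and its Riemann form into Shimura's construction \cite[Sec.\ 2]{shimura63} and invokes \cite[Thm.\ 5]{shimura63} (or \cite[Thm.\ 9.9.1]{birkenhakelange}), which asserts directly that a sufficiently general member of the resulting family has endomorphism ring exactly $\calo_K$ precisely under the combinatorial hypothesis of (b). You instead reprove the relevant instance of Shimura's theorem from scratch: identify the generic endomorphism algebra as the commutant of $\mt^\gen$, use Andr\'e's fixed-part theorem together with Borel density to get $(\mt^\gen)^{\mathrm{der}}=\su(W)$, compute the commutant of $\su(W)$ (which is $K$ for $n\ge 3$ and a quaternion algebra $B/K_0$ for $n=2$), and then exclude $B$ via Albert. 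This is more work but more transparent about \emph{why} the condition ``some $r_is_i\neq 0$ and some $r_js_j\neq 1$'' is exactly what is needed.

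One point in your $n=2$ case deserves tightening. The exclusion of $B$ should come from the local invariants of $B$ itself: under the exceptional isomorphism $\su(W)\iso\SL_1(B)$, the algebra $B_v$ at a real place $v$ of $K_0$ is split precisely when the Hermitian form on $W$ has signature $(1,1)$ at $v$ and is ramified when the signature is $(2,0)$ or $(0,2)$. Your hypotheses give one place of each kind, so $B$ is neither totally indefinite nor totally definite (and, being ramified at some real place, is a division algebra, so $Y$ would be simple and Albert applies). Your appeal to Lemma~\ref{lempossend} is a bit off-target here: that lemma's argument constrains the signature \emph{given} that $\End^0$ is an indefinite quaternion algebra, whereas what you actually need is the converse direction identifying the invariants of $B$ from the Hermitian signature. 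Once rephrased this way, your argument goes through cleanly.
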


\begin{proof}
For each $1 \le j \le m$, use Lemma \ref{lembuildcm} to construct a
principally polarized abelian variety $X^{(j)}$ with action by
$\calo_K$ of signature $\Sigma(S^{(j)})$.  Then $Y = \cross_{1 \le j
  \le n} X^{(j)}$, equipped with its product principal polarization,
has an $\calo_K$-action whose signature is given by
\eqref{eqcmsig}.   This proves (a).

Now use $H_1(Y,\integ)$ and its Riemann form as input to Shimura's
construction in \cite[Sec.\ 2]{shimura63}.  (The relevant case is also
cleanly documented in \cite[Sec.\ 9.6]{birkenhakelange}.)  The output
of this construction is a family of principally polarized abelian
varieties on which $\calo_K$ acts.  The endomorphism ring of a
sufficiently general member of this family is $\calo_K$, and not
larger, if the signature satisfies the combinatorial
constraint given in (b) (\cite[Thm.\ 5]{shimura63}; see also
\cite[Thm.\ 9.9.1]{birkenhakelange}).
\end{proof}

\begin{lemma}
\label{lemcmsig}
Let $K_0$ be a totally real field of degree $m$ which satisfies
\eqref{thecond}.  Let $E$ be a quadratic imaginary field and let $K = K_0E$.
Given positive integers $n$, $r$ and $s$ satisfying $r+s = mn$,
suppose that either:
\begin{alphabetize}
\item $n\ge 3$; or
\item $n = 2$ but either $m\ge 3$ or $m=2$ and $\st{r,s} = \st{3,1}$; or
\item $n=1$ and $\gcd(r,s)=1$.
\end{alphabetize}
Then there exists a principally polarized abelian variety $Y$ of
dimension $mn$ such that $\End(Y) = \calo_K$ and the signature of $Y$,
as $\calo_E$-abelian variety, is $(r,s)$.
\end{lemma}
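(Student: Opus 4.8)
The goal is to realize a prescribed $\calo_E$-signature $(r,s)$ with $r+s=mn$ on a principally polarized abelian variety of dimension $mn$ whose endomorphism ring is exactly $\calo_K$. The plan is to reduce the problem to a purely combinatorial one, namely to choose subsets $S^{(1)}, \dots, S^{(n)} \subseteq \{1, \dots, m\}$ so that the fine signature $(r_1, \dots, r_m; s_1, \dots, s_m)$ produced by Lemma~\ref{lembuildoksimple} both sums correctly to $(r,s)$ under the $\calo_E$-signature map (i.e.\ $\sum_i r_i = r$ and $\sum_i s_i = s$) and satisfies the simplicity constraint of part~(b) of that lemma. Once such subsets exist, Lemma~\ref{lembuildoksimple}(b) immediately furnishes the desired $Y$ with $\End(Y) \iso \calo_K$, and principality is guaranteed because \eqref{thecond} holds and each $X^{(j)}$ is constructed principally polarized in Lemma~\ref{lembuildcm}. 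So the entire content is the combinatorics of realizing a target pair of column-sums by an $m \times n$ zero-one matrix (where entry $(i,j)$ records whether $i \in S^{(j)}$) subject to the side condition.

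First I would restate the reduction precisely. Each $S^{(j)}$ is a column, $r_i$ counts the ones in row $i$, $s_i = n - r_i$ counts the zeros, and $\sum_i r_i$ is the total number of ones. The requirement $\sum_i r_i = r$ with $0 \le r_i \le n$ for each $i$ is solvable for the row-sums $(r_1, \dots, r_m)$ exactly when $0 \le r \le mn$, which holds by hypothesis; and \emph{any} prescribed sequence of row-sums with $0 \le r_i \le n$ can be realized by an actual zero-one matrix simply by filling in each row independently. Thus the $\calo_E$-signature condition is essentially free. The only real work is to arrange the row-sums so that the constraint in Lemma~\ref{lembuildoksimple}(b) is met: there must exist indices $i, j$ with $r_i s_i \ne 0$ (a ``balanced'' row, with both a one and a zero, so $1 \le r_i \le n-1$) and $r_j s_j \ne 1$ (a row whose one/zero split is not $\{1, n-1\}$ in the $n=2$ sense—more precisely $r_j s_j \ne 1$, which for $n \ge 3$ is automatic once $r_j \notin \{0,n\}$).

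I expect the main obstacle to be the edge cases where the prescribed column-sum $(r,s)$ forces the row-sums into a corner, making the side condition hard to satisfy; this is exactly why the hypotheses split into cases (a)--(c). The heart of the argument is a case analysis on $n$ and $m$ mirroring those hypotheses. For $n \ge 3$ (case a) there is ample room: one can spread the ones so that some row is balanced with $r_i s_i \ge 2 > 1$, and a second balanced row gives the pair $(i,j)$ with no difficulty. For $n = 2$ (case b), a row is balanced only if $r_i = s_i = 1$, giving $r_i s_i = 1$; here the condition $r_j s_j \ne 1$ cannot be met by a \emph{balanced} row, so one needs a row with $r_j \in \{0, 2\}$, i.e.\ $r_j s_j = 0$—and having both a balanced row (for index $i$) and an all-ones or all-zeros row (for index $j$) requires enough rows and a compatible parity of $r$, which is precisely what $m \ge 3$, or $m = 2$ with $\{r,s\} = \{3,1\}$, secures. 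For $n = 1$ (case c) every row-sum is $0$ or $1$, so no row is balanced and part~(b) does not apply directly; instead, with $\gcd(r,s) = 1$ one appeals to the fact that a CM type with coprime signature gives a simple abelian variety with $\End(Y) = \calo_K$ by the standard theory already cited in Lemma~\ref{lembuildcm}. The final step is to verify in each branch that the chosen row-sums are simultaneously compatible with $\sum r_i = r$ and with the existence of the required pair $(i,j)$, completing the proof.
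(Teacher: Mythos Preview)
Your approach is essentially identical to the paper's: both reduce via Lemma~\ref{lembuildoksimple} to the combinatorics of filling an $n\times m$ (or $m\times n$) sign matrix with prescribed total and the side condition on the $r_i s_i$, then handle cases (a)--(c) exactly as you outline, with (c) treated separately via primitivity of the CM type. One small simplification you can adopt from the paper: in case (a) a \emph{single} balanced column with $r_1 s_1 \ge n-1 \ge 2$ already serves as both indices $i$ and $j$, so no second balanced row is needed (relevant when $m=1$).
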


\begin{proof}
Given Lemma \ref{lembuildoksimple}, it suffices to show that there
exist subsets $S^{(1)}, \cdots, S^{(m)}$ such that, if $r_i$ and $s_i$
are calculated as in \eqref{eqcmsig}, then there are
indices $i$ and $j$ such that $r_is_i\not = 0$ and $r_js_j\not = 1$.
We briefly indicate how this may be accomplished.  Note that if
$S^{(j)}$ is represented as a row of length $m$ with entries $+$ and
$-$, then specifying types $S^{(1)}, \cdots, S^{(n)}$ is equivalent to 
labeling each cell of an $n\times m$ matrix with either $+$ or $-$.
Then $r_i$ is the number of $+$ symbols in column $i$, and $s_i$ is
defined analogously.

We may and do assume that $r \ge s$.
Suppose $n\ge 3$.   Any labeling of the $n\times m$ matrix in which
both symbols appear in the first column will suffice, since then $r_1,
s_1 \ge 1$ and $r_1+s_1 = n \ge 3$.  This proves (a).

Now suppose $n=2$.  Under the additional hypotheses, $r \ge 3$ and $s
\ge 1$.  Consequently, we may choose a labeling in which the first two
columns are given by
\[
\begin{matrix}
++\\+-
\end{matrix}.
\]
This proves (b).

For part (c), the hypothesis implies that any CM type $S$ with
$\calo_E$-signature $(r,s)$ is simple, and thus so is an associated
abelian variety (\cite[p. 213]{mumfordabvar}).
\end{proof}

If one is willing to believe the folklore conjecture that a positive
proportion of totally real fields have strict class number one,
there are many examples of the behavior allowed by Lemma
\ref{lempossend}:

\begin{proposition}
\label{propshape}
Let $K_0$ be a totally real field of degree $m$ which satisfies
\eqref{thecond}, and let $K = K_0(\zeta_3)$. Let $n$ be a natural number such that $mn\not = 2$.
\begin{alphabetize}
\item There exists a simple principally polarized abelian variety $X$
  of dimension $mn$ such that $\End(X) \iso \calo_K$ and, as
  $\integ[\zeta_3]$-abelian variety, $X$ has signature $(mn-1,1)$.

\item If furthermore $1 \le mn \le 5$, then there exists a smooth cubic
  surface $Y$ for which the associated abelian variety $Q(Y)$ is
  isogenous to $X_0^{5-mn} \cross X$, where $X_0$ is the elliptic
  curve with $\End(X_0) \iso \integ[\zeta_3]$.
\end{alphabetize}
\end{proposition}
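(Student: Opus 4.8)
The plan is to deduce part (a) directly from Lemma \ref{lemcmsig}, and then to derive part (b) by combining the resulting abelian variety with copies of $X_0$ and replacing the (necessarily reducible) product with an isogenous point inside the Torelli image.

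For part (a), I would apply Lemma \ref{lemcmsig} with $K = K_0(\zeta_3)$, so that $E = \rat(\zeta_3)$, and with $r = mn-1$, $s = 1$, which satisfy $r + s = mn$. It then suffices to check that one of the three numerical hypotheses of that lemma holds whenever $mn \neq 2$. If $n \ge 3$, hypothesis (a) applies. If $n = 2$, then $mn \neq 2$ forces $m \ge 2$; when $m \ge 3$ the first alternative of hypothesis (b) applies, while for $m = 2$ one has $(r,s) = (3,1)$, so the second alternative applies. If $n = 1$, then $\gcd(r,s) = \gcd(mn-1,1) = 1$, so hypothesis (c) applies. In each case Lemma \ref{lemcmsig} produces a principally polarized $X$ of dimension $mn$ with $\End(X) \iso \calo_K$ and $\integ[\zeta_3]$-signature $(mn-1,1)$. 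Since $\End(X)_\rat \iso K$ is a field, $X$ cannot be isogenous to a nontrivial product, hence is automatically simple, giving (a).

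For part (b), assume $1 \le mn \le 5$ and set $A = X_0^{5-mn} \cross X$, with the product principal polarization and the diagonal $\integ[\zeta_3]$-action; the Rosati involution induces complex conjugation on $\integ[\zeta_3]$ on each factor (the standard CM polarization on $X_0$, the polarization furnished by Lemma \ref{lemcmsig} on $X$), hence on $A$. Its dimension is $(5-mn)+mn = 5$, and by additivity of signatures under products (as in the proof of Lemma \ref{lemclassisog}) its $\integ[\zeta_3]$-signature is $(5-mn,0)+(mn-1,1) = (4,1)$, so $A$ defines a point of $\stk M(\cx)$.

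It remains to realize $A$, up to isogeny, inside the Torelli image. When $mn = 5$ the variety $A = X$ is simple, so it is not isomorphic to a product of an elliptic curve and a fourfold and already lies in $\stk N(\cx) = \tau(\stk S)(\cx)$. When $mn < 5$, however, $A$ is genuinely a product $X_0 \cross (X_0^{4-mn}\cross X)$ of an elliptic curve and a $\integ[\zeta_3]$-fourfold, so it sits in the reducible divisor $\stk D$ rather than in $\stk N$; this is the main obstacle, and it is resolved not by altering $A$ but by moving within its isogeny class. Concretely, I would invoke the density of Hecke orbits recorded in Section \ref{subsecmt}: the polarization-compatible $\integ[\zeta_3]$-isogeny class of $A$ is analytically dense in $\stk M(\cx)$ and therefore meets the nonempty open locus $\stk N$. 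Choosing $B$ in this intersection and putting $Y = \tau^{-1}(B)$, Theorem \ref{thmoduli} identifies $B$ with $Q(Y)$ for a smooth cubic surface $Y$, while by construction $Q(Y) = B$ is isogenous to $A = X_0^{5-mn}\cross X$, as required. The real content is thus concentrated in this last isogeny step, which trades the unavoidable reducibility of $A$ for a point genuinely in the image of the Torelli map.
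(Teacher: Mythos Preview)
Your proof is correct and follows essentially the same route as the paper's: part (a) is reduced to Lemma \ref{lemcmsig} (you simply spell out the case check the paper leaves implicit), and part (b) uses the analytic density of Hecke orbits in $\stk M_\cx$ to move the product $X_0^{5-mn}\cross X$ into the open locus $\stk N_\cx = \tau(\stk S_\cx)$. The paper's version omits your case distinction $mn=5$ versus $mn<5$ and your verification that the product has signature $(4,1)$, but the substance is identical.
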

\begin{proof}
Part (a) is a special case of Lemma \ref{lemcmsig}.  For part (b),
consider the moduli point of $X_0^{5-mn}\cross X$ in $\stk M(\cx)$.
Its Hecke orbit is analytically dense in $\stk M_\cx$, and in
particular include points in the (Zariski open) subset $\tau(\stk
S_\cx) = \stk N_\cx$.
\end{proof}

\begin{remark}
Carlson and Toledo have also addressed the question of when an abelian
variety with complex multiplication admits a principal polarization.
In particular, \cite[Prop.\
5]{carlsontoledo13} proves a result similar to Proposition
\ref{propshape} in the special case $n=1$, albeit with an additional
positivity condition on $\ida$ and less explicit control over $r$ and $s$.
\end{remark}

For the sake of completeness (and at the suggestion of the referee),
we observe that the case $g=2$ of Lemma \ref{lempossend} occurs, too.

\begin{lemma}
Let $D$ be an indefinite, nonsplit quaternion algebra over $\rat$
which contains $E$.  Then there exists a principally polarized abelian
surface $X$ with action by $\calo_E$ such that $\End(X)_\rat
\iso D$ and $X$, as $\calo_E$-abelian variety, has signature $(1,1)$.
\end{lemma}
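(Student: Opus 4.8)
The goal is to construct a principally polarized abelian surface $X$ with $\End(X)_\rat \iso D$ and $\calo_E$-signature $(1,1)$, where $D$ is a given indefinite nonsplit quaternion algebra over $\rat$ containing $E$. The plan is to produce $X$ analytically, as $\cx^2/\Lambda$, from a Hodge structure adapted to $D$, and then read off the endomorphism algebra and signature. First I would observe that since $D$ is indefinite, $D \tensor_\rat \real \iso \mat_2(\real)$, which is the Hodge-theoretic hallmark of an abelian surface of ``quaternionic multiplication'' (false elliptic curve) type; the classical Shimura construction produces a one-parameter family of principally polarized abelian surfaces $X$ with $D \subseteq \End(X)_\rat$, and for a sufficiently general member one has equality $\End(X)_\rat \iso D$. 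Concretely, one chooses a maximal order $\calo_D \subset D$, realizes it on a suitable lattice $\Lambda \subset \cx^2$, and uses the indefiniteness to place the complex structure so that the Rosati involution attached to a principal polarization restricts to the canonical (reduced-norm) involution on $D$.

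The second step is to arrange the $\calo_E$-action of the correct signature. Since $E \subseteq D$ is stable under the Rosati involution (one may conjugate the involution so that this holds, as $E$ is a CM field and the standard involution on $D$ restricts to complex conjugation on $E$), the embedding $E \inject D \inject \End(X)_\rat$ endows $X$ with an $\calo_E$-action after possibly passing to an isogenous variety on which $\calo_E$, not merely a suborder, acts. The signature of this action is the decomposition of $\Lie(X) = \cx^2$ into $\iota$- and $\bar\iota$-eigenspaces for $E$. Because $X$ is of quaternionic type, $H^1(X,\rat)$ is a free $D$-module of rank one, hence a free $E$-module of rank two, and the two embeddings $\tau_1,\bar\tau_1: E \inject \cx$ must each occur in $\Lie(X)$; were the signature $(2,0)$ or $(0,2)$, the Hodge structure would force $X$ to have complex multiplication by a field, contradicting $\End(X)_\rat \iso D$ noncommutative. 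Thus the signature is necessarily $(1,1)$, which is exactly what is required. (This is the same mechanism already exploited in the proof of Lemma \ref{lempossend}, where a quaternionic $\rat(\zeta_3)$-action was shown to have balanced signature $(d,d)$; here $d = 1$.)

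The main obstacle I expect is simultaneously securing the \emph{principal} polarization together with the \emph{full} order $\calo_E$ acting --- precisely the delicate combination flagged in the introduction and in Section \ref{subsecendring}. The existence of a polarization whose Rosati involution induces the right involution on $D$ (and hence complex conjugation on $E$) is guaranteed by \cite[Lemma 9.2]{kottwitz}, but upgrading it to a principal polarization on a model admitting the maximal order $\calo_E$ may require adjusting $X$ within its isogeny class. Since the target dimension is only $2$ and $E = \rat(\zeta_3)$ has class number one, I expect this to be manageable: one can track the lattice $\Lambda$ explicitly and rescale $\gamma$ in the Riemann form, just as in the proof of Lemma \ref{lembuildcm}, so that the resulting form is unimodular while $\Lambda$ remains $\calo_E$-stable. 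The genericity needed to force $\End(X)_\rat$ down to exactly $D$ (rather than a larger CM field) is the standard dimension count in Shimura's construction \cite[Thm.\ 5]{shimura63}, applied to the quaternionic Shimura curve, and poses no difficulty once the family is in hand.
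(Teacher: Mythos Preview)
Your plan is essentially the paper's: use the quaternionic Shimura curve attached to a maximal order $\calo_D \supset \calo_E$, take a generic member so that $\End(X)_\rat = D$ by \cite[Thm.\ 5]{shimura63}, and then check the $\calo_E$-signature is balanced. Two points of comparison are worth noting.

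First, the obstacle you flag --- securing a \emph{principal} polarization while retaining the full order --- is exactly what the paper addresses, but it does so by a single citation rather than by the lattice manipulation you propose: \cite[Prop.\ III.1.5]{boutotcarayol} gives, on any abelian surface with $\calo_D$-action for $\calo_D$ a maximal order in an indefinite quaternion algebra, a \emph{unique} principal polarization whose Rosati involution is the chosen positive involution $(*)$ on $D$. This disposes of the issue at once; your ad hoc rescaling of $\gamma$ as in Lemma \ref{lembuildcm} would be considerably more work and is unnecessary.

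Second, your signature argument (if the signature were $(2,0)$ then $X$ would have CM by a field, contradiction) is valid, and your invocation of the mechanism in Lemma \ref{lempossend} is apt. The paper instead argues directly: by Skolem--Noether there exists $a \in D$ with $b^{(*)} = aba^{-1}$ for $b \in E$, so the action of $a$ on $\Lie(X)$ interchanges the two $\calo_E$-eigenspaces, forcing them to have equal dimension. Both arguments reach the same conclusion; the paper's is more elementary (no Hodge-theoretic input), while yours ties the statement back to the earlier classification.
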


\begin{proof}
Let $D$ be an indefinite quaternion algebra over $\rat$ which contains
$E$; choose an involution $(*)$ on $D$ which induces complex
conjugation on $E$. 
Let $\calo_D$ be any
maximal order of $D$ which contains $\calo_E$.  There is a
(one-dimensional, and in particular nonempty) Shimura
curve whose points parametrize abelian
surfaces $X$ with an action by $\calo_D$. On each $X$
there exists a unique principal polarization for which the induced
Rosati involution coincides with $(*)$ \cite[Prop.\
III.1.5]{boutotcarayol}.  If $X$ is sufficiently general, then $\End(X)_\rat$ is $D$, and not larger (again,
see \cite[Thm.\ 5]{shimura63} or \cite[Thm.\ 9.9.1]{birkenhakelange}).  
It remains to
verify the signature condition.   By the Skolem-Noether theorem, there exists some
$a \in D$ such that, for $b \in E \subset D$, $b^{(*)} = aba\inv$.
The action of $b$ on $\Lie(X)$ thus exchanges the two eigenspaces for
the action of $\calo_E$, and each is necessarily one-dimensional.
\end{proof}

\section{Arithmetic applications}
\label{secmono}

We start by calculating the geometric monodromy group of the first
\'etale cohomology of the tautological abelian scheme over $\tilstk T$
and its pullback to $\tilstk S$.  This is an arithmetic analogue of
the topological monodromy calculation by Allcock, Carlson and Toledo
(Proposition \ref{propactmono}).    Subsequently, this is applied to
two problems in the arithmetic of cubic surfaces.

First, we show that there are cubic surfaces $Y$ and threefolds $Z$
with {\em rational} coefficients for which $\End(Q(Y))$ and
$\End(P(Z))$ are as small as possible.

Second, the arithmetic of the 27 lines on a cubic surface is a natural
locus of inquiry \cite{jordan70}.  We close the paper by calculating
the field of definition of the lines on a cubic surface over a field
of positive characteristic.

\subsection{$\integ_\ell$-monodromy}
\label{subsecmono}

There is a natural abelian scheme over $\tilstk
T$ which pulls back to one on $\tilstk S$.  The first \'etale
cohomology group of this abelian scheme gives a lisse
$\integ_\ell$-sheaf on the base, which makes sense in any
characteristic other than $\ell$.  The main goal of the present section is to compute
the geometric monodromy groups of these $\integ_\ell$ sheaves.  (In
characteristic zero, this recovers the fact (Proposition
\ref{propactmono}) that the connected component of the topological
monodromy group of $\tilstk S$ is $\su$.)

Let $\calf/S$ be a lisse $\integ_\ell$-sheaf of rank $r$ over a
connected scheme (or stack)
$S$ on which $\ell$ is invertible.  After choosing a geometric point
$s\in S$, such an object is equivalent to a
representation $\rho_{\calf/S}: \pi_1(S,s) \ra \aut(\calf_s) \iso
\gl_r(\integ_\ell)$.  Let $\mono(\calf/S)$ be the monodromy group of
$\calf$, i.e., the image of this representation; it is a closed
subgroup of the $\ell$-adic group $\gl_r(\calf_s)$.

Recall that, if $\ell$ is an odd prime, if $H$ is either a special unitary group $\su_{r,s}$ or
symplectic group $\sp_{2g}$ and if $\Gamma \subset H(\integ_\ell)$ is
a closed subgroup, then $\Gamma = H$ if and only if the composition
$\Gamma \inject H(\integ_\ell) \ra H(\integ/\ell)$ is surjective.  In
this way, calculating the monodromy group of a lisse
$\integ_\ell$-sheaf $\calf$ is often tantamount to computing computing the
monodromy group of the $\integ/\ell$-sheaf $\calf\tensor\integ/\ell$.

We start with some geometry.  Let $\stk H_5$ be the moduli space of
hyperelliptic curves of genus five, identified with its image in $\stk
A_5$ under the (classical) Torelli map.

\begin{proposition}
\label{prophyperboundary}
Let $k$ be a field in which $2$ is invertible.
Then the closure of $\til\varpi(\tilstk T_k)$ in $\stk A_{5,k}$ contains $\stk
H_{5,k}$. 
\end{proposition}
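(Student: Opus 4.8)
The plan is to realize a hyperelliptic genus five curve's Jacobian as a degeneration of Prym varieties coming from cubic threefolds, exploiting the fact that $\til\varpi$ arises from an arithmetic Prym construction. The key geometric input is the classical picture of how intermediate Jacobians of cubic threefolds degenerate: when a smooth cubic threefold acquires a node, its intermediate Jacobian degenerates, and the semistable limit is governed by the Prym variety of a suitable branched double cover of a curve. Concretely, I would recall that a cubic threefold with a single node is birational to a quadric bundle (or, after projection from the node, to $\proj^3$ blown up along a curve), and that Mumford's theory identifies the limiting abelian variety with the Jacobian or Prym of an explicit curve. The concrete target here is the genus five hyperelliptic locus, so the first task is to pin down which boundary stratum of $\tilstk T$ maps to hyperelliptic Jacobians.

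First I would pass to the boundary of the space of cubic threefolds and examine the nodal degenerations. The cleanest source is the description, going back to Mumford and developed by Casalaina-Martin, Collino, and others, of the intermediate Jacobian of a one-nodal cubic threefold: projecting from the node exhibits the cubic as a conic bundle over $\proj^2$ whose discriminant is a quintic curve, and the intermediate Jacobian is the Prym variety of the associated double cover. For the hyperelliptic comparison, I would instead use the triple-nodal or more degenerate strata where the Prym construction collapses to a (hyperelliptic) Jacobian of genus five. Thus the second step is to identify a specific family of cubic threefolds, defined over $\integ[1/2]$, whose limiting abelian fivefold under $\til\varpi$ is the Jacobian of a genus five hyperelliptic curve, thereby placing a point (and then, by varying moduli, a whole subvariety) of $\stk H_{5,k}$ in the closure of $\til\varpi(\tilstk T_k)$.

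The third step is a dimension and dominance count: since $\stk H_5$ has dimension $2\cdot 5 - 1 = 9$ and $\tilstk T$ is large enough that $\til\varpi(\tilstk T)$ has big image in $\stk A_5$, I would verify that the family of hyperelliptic Jacobians I produce sweeps out all of $\stk H_{5,k}$, not merely a single point. This amounts to checking that the moduli of the branch points of the hyperelliptic curve vary as the cubic threefold degenerations vary, which is a Zariski-density statement and can be reduced to an infinitesimal (tangent space) computation or to an explicit monodromy/period argument on the boundary divisor. Because everything is required over an arbitrary field $k$ with $2$ invertible, I would work with the boundary of the arithmetic compactification of $\tilstk T$ over $\integ[1/2]$ and take closures there, so that specialization to any such $k$ is automatic.

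The hard part will be the explicit degeneration computation identifying the semistable limit of the intermediate Jacobian (Prym) of a degenerating cubic threefold with a \emph{hyperelliptic} genus five Jacobian, and in particular doing so in a way that is valid in positive characteristic rather than only over $\cx$. Over the complex numbers one has Hodge-theoretic and period-integral tools, but here the arithmetic Prym functor $\til\varpi$ of \cite{achtercubmod} must be used directly, together with the compatibility (recorded in the excerpt) that the middle cohomology of $\calz$ is a Tate twist of $H^1$ of $\til\varpi^*\calx$. I expect the cleanest route is to exhibit one very explicit degenerate cubic threefold over $\integ[1/2]$, compute its Prym by hand as a hyperelliptic Jacobian, and then propagate across the hyperelliptic locus by the density argument of step three; the principal obstacle is controlling the semistable reduction of $\til\varpi$ at the boundary uniformly in the characteristic.
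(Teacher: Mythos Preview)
Your plan has a genuine gap: the degeneration you describe does not land in the hyperelliptic locus. Projecting a one-nodal cubic threefold from its node produces a conic bundle over $\proj^2$ whose discriminant is a plane quintic, and the limiting principally polarized abelian fivefold is the Prym of the associated \'etale double cover of that quintic. Pryms of \'etale double covers of smooth plane quintics are never hyperelliptic Jacobians (indeed, this Prym locus and the hyperelliptic locus are disjoint in $\stk A_5$), so the nodal stratum is a dead end for reaching $\stk H_5$. Your gesture toward ``triple-nodal or more degenerate strata'' does not identify a workable candidate, and the subsequent dimension/dominance count has nothing to feed on until a correct degeneration is found.

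The paper's proof uses a quite different and much more singular degeneration: the \emph{chordal cubic}, i.e., the secant variety $Z_0$ of a rational normal quartic curve $N\subset\proj^4$, which is a cubic threefold singular along the whole curve $N$. One smooths $Z_0$ in a pencil $t^2G_1+G_0$ where $G_1$ cuts out a smooth cubic meeting $N$ transversely in twelve points $q_1,\ldots,q_{12}$; Collino's theorem (\cite{collino82}) shows, over $\cx$, that the limit of the intermediate Jacobians is exactly the Jacobian of the hyperelliptic curve branched over $q_1,\ldots,q_{12}$. Since the twelve points may be chosen arbitrarily, this sweeps out all of $\stk H_5$ with no further dominance argument needed. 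For positive characteristic the paper does \emph{not} redo the Prym computation arithmetically as you propose; instead it lifts the whole configuration (the rational normal curve, the twelve points, both cubics) to a mixed-characteristic DVR, applies Collino's transcendental result on the generic fiber, and then specializes using properness of $\barstk A_5$. Your plan to control the semistable reduction of $\til\varpi$ directly in characteristic $p$ would require reproving Collino's result algebraically, which is substantially harder than the lifting argument.
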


\begin{proof}
Since taking Zariski closure commutes with flat base change, we assume
that $k$ is algebraically closed.
Let $X/k$ be an abelian fivefold which is a hyperelliptic Jacobian.
More explicitly, suppose $X$ is the Jacobian of the
hyperelliptic curve branched at points $q_1, \cdots, q_{12} \in
\proj^1$.  Identify this copy of $\proj^1$ with a rational normal
curve $N\subset \proj^4$, and   let $Z_0$ be the secant variety of
$N$.  Then $Z_0$ is a cubic threefold, singular exactly along $N$.
Let $G_0$ be a defining cubic form for this singular threefold.  Let
$Z_1$ be any {\em smooth} cubic threefold passing through each of the
$p_i$, transverse to $N$, and let $G_1$ be a defining cubic form for $Z_1$.

Consider the cubic threefold $Z$ over $k\powser t$ with defining
equation $t^2G_1+G_0$.  To prove the proposition, it suffices to show that
the closure of $P(Z_{k\laurser t})$ in $\stk A_5$ contains the moduli
point of $X$.

If $k$ has characteristic zero, this is the main result of
\cite{collino82}; the claim is proved by transcendental means.

If $k$ has positive characteristic, let $R$ be a mixed characteristic
discrete valuation ring with residue field $k$, and let $K= \Frac R$
be its fraction field.  Lift $N$ to a rational normal curve $\hat N
\subset \proj^4_R$, and for $i \in \st{1, \cdots, 12}$ let $\hat q_i
\in \hat N(R)$ be a section lifting $q_i$.  Let $\hat Z_0/R$ be the
secant variety of $\hat N$.  Similarly, let $\hat
Z_1/R$ lift $Z_1$ and pass through each $\hat q_i$ transversally to
$\hat N$.  For $i = 1,2$,  let $\hat G_i$ be the cubic form with
coefficients in $R$ defining $\hat Z_i$, and consider the cubic form
$t^2\hat G_1 + \hat G_0$ over $R\powser t$.

Let $\xi: \spec R\powser t\inject \barstk A_5$ be the closure of the
moduli point of $P(Z_{K\laurser t})$ in the minimal compactification
of $\stk A_5$.  The result in characteristic zero shows that $\xi_K:
\spec K \ra K\powser t \inject \barstk A_5$ is the moduli point of the
Jacobian $\hat X_K$ of the hyperelliptic curve branched over (the generic fibers
of) $\hat q_1, \cdots, \hat q_{12}$.  Since the closure $\xi_R$ of
$\xi_K$ contains the moduli point of $X$, $\xi$ is in fact a morphism
$\xi:\spec R \powser t \inject \stk A_5$.  Moreover, $\xi_k$ is in the closure of $\xi_{k\laurser t}$, which
means that $P(Z_{k\laurser t})$ is the generic fiber of a deformation
of $X$.
\end{proof}

The next result says that certain natural monodromy groups attached to
cubic surfaces and threefolds are as large as possible, given the
obvious constraints imposed by polarization and, for cubic surfaces,
an action by $\integ[\zeta_3]$.

\begin{remark}
\label{remmonoqlbar}
The $\bar\rat_\ell$-monodromy group of the universal family of
hypersurfaces of given degree and dimension is known, thanks to work
of Deligne \cite[4.4.1]{deligneweil2}.
The improvement in Theorem \ref{thmono} is that one knows the
$\integ_\ell$-monodromy exactly, without passage to the Zariski
closure or extension to $\bar\rat_\ell$-coefficients.
\end{remark}

Recall that the tautological abelian fivefold and cubic threefold are
given by $f:\calx \ra \stk A_5$ and $h:\calz \ra \tilstk T$.

\begin{theorem}
\label{thmono}
Let $k$ be an algebraically closed field in which $2$ is invertible,
and let $\ell$ be an odd rational prime invertible in $k$.  Then:
\begin{alphabetize}
\item $\mono(R^1 f_*\integ_\ell \ra \til\varpi(\tilstk T_k)) \iso
  \sp_{10}(\integ_\ell)$;
\item $\mono(R^3 h_*\integ_\ell(1) \ra \tilstk T) \iso
  \sp_{10}(\integ_\ell)$;
\item $\mono(\til \phi^* R^3 h_*\integ_\ell(1) \ra \tilstk S) \iso
  \u^*(\integ_\ell)$ if $\ell\not = 3$ and ${\rm char}(k)\not = 3$.
\end{alphabetize}
\end{theorem}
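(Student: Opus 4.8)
The plan is to handle the two symplectic assertions (a), (b) first and then, using the geometry they encode together with the Shimura-theoretic input promised in the introduction, to pin down the unitary monodromy in (c). In every case I would begin by reducing the integral statement to its reduction modulo $\ell$: since $\ell$ is odd, the lemma recalled just before the statement shows that a closed subgroup of $\sp_{10}(\integ_\ell)$ (respectively of $\su(\integ_\ell)$) is the full group as soon as it surjects onto the corresponding group of $\ff_\ell$-points. Thus each part becomes a statement about a monodromy group over $\ff_\ell$.

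For (b) the sheaf $R^3 h_*\integ_\ell(1)$ is the middle cohomology of the universal smooth cubic threefold, so its mod-$\ell$ monodromy is generated by the Picard--Lefschetz transvections attached to the vanishing cycles of a Lefschetz pencil. By Deligne's computation (Remark \ref{remmonoqlbar}) these vanishing cycles form a single monodromy orbit and span $R^3h_*\ff_\ell$; hence the representation is irreducible and is generated by one conjugacy class of symplectic transvections. The classical structure theorem for subgroups of $\sp_{2g}(\ff_\ell)$ generated by transvections then forces the mod-$\ell$ monodromy to be all of $\sp_{10}(\ff_\ell)$, which after the integral upgrade gives (b). Now (a) is immediate: by the identification of the middle cohomology of $\calz$ with the first cohomology of the Prym, the sheaf of (b) is $\til\varpi^*$ of the sheaf of (a), so the monodromy group of (a) contains the pullback monodromy $\sp_{10}(\integ_\ell)$ of (b); since it is in any case contained in $\sp_{10}(\integ_\ell)$ by the polarization, it equals it. As a geometric check one can instead invoke Proposition \ref{prophyperboundary}: the explicit degeneration of a smooth cubic threefold to the secant variety places a genus-five hyperelliptic Jacobian in the closure of $\til\varpi(\tilstk T_k)$, its single vanishing cycle contributes a transvection to $\mono(R^1f_*\integ_\ell\ra\til\varpi(\tilstk T_k))$, and the known full symplectic monodromy of the hyperelliptic locus again produces $\sp_{10}$.

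For (c) the local system $\til\phi^* R^3 h_*\integ_\ell(1)$ on $\tilstk S$ carries the $\calo_E\tensor\integ_\ell$-action and the Hermitian pairing coming from the cyclic triple-cover structure, so its monodromy group $G$ is a closed subgroup of $\u^*(\integ_\ell)$. I would first secure the semisimple part. The excerpt already records that the connected component of the $\bar\rat_\ell$-monodromy is $\su$, and Proposition \ref{propactmono} exhibits $\su(\integ)$ inside the topological monodromy; since $\su(\integ)$ is dense in $\su(\integ_\ell)$ by Lemma \ref{lemimageunitary}, it suffices to show that $G$ surjects onto $\su(\ff_\ell)$ modulo $\ell$ to conclude $\su(\integ_\ell)\subseteq G$. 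The residual quotient $G/\su(\integ_\ell)$ injects into the finite group $\u^*(\integ_\ell)/\su(\integ_\ell)$, of order prime to $\ell$, so it too is detected by its reduction mod $\ell$. Both the surjectivity onto $\su(\ff_\ell)$ and, more delicately, the determination of this quotient inside $\T_1$ I would translate into a statement about the moduli stack $\stk M^\ell$ of $\integ[\zeta_3]$-abelian fivefolds of signature $(4,1)$ equipped with a principal level-$\ell$ structure: the number of its geometric connected components is exactly the index of $G\bmod\ell$ in $\u^*(\ff_\ell)$.

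The main obstacle is this last component count, which is the content of the promised Lemma \ref{lemunitarymono}. I expect the real difficulty to reside not in the semisimple factor but in the toral part: the components of $\stk M^\ell$ form a torsor under a reflex-norm class group that must be computed by class field theory, and one must show that $G$ surjects onto the full $\mmu_6$ rather than only the $\mmu_3$ visible from the naive topological picture of Proposition \ref{propactmono}. To control this I would use arithmetic toroidal and minimal compactifications of the PEL Shimura variety attached to $\integ[\zeta_3]$: these furnish a proper model whose boundary is understood well enough to count components, to verify that the count is insensitive to the characteristic, and hence to transport the computation from characteristic zero to characteristic $p$. The hypotheses $\ell\neq 3$ and ${\rm char}(k)\neq 3$ enter precisely to keep $E$ unramified at $\ell$ and the group scheme $\mmu_6$ étale, so that both the class-field-theoretic and the compactification arguments apply; this determinant bookkeeping is exactly the subtle point where the earlier error recorded in Remark \ref{remwhoops} occurred.
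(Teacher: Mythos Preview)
For (a) and (b) you reverse the paper's order. The paper proves (a) first, and precisely by what you offer as the ``geometric check'': Proposition~\ref{prophyperboundary} places $\stk H_{5,k}$ in the closure of $\til\varpi(\tilstk T_k)$, so by semicontinuity $\mono(R^1f_*\integ_\ell\to\til\varpi(\tilstk T_k))$ contains $\mono(R^1f_*\integ_\ell\to\stk H_{5,k})$, which is already $\sp_{10}(\integ_\ell)$ by the cited results on hyperelliptic Jacobians. Part~(b) is then deduced from~(a) via the isomorphism $R^3h_*\integ_\ell(1)\iso\til\varpi^*R^1f_*\integ_\ell$ and the fact that $\til\varpi$ is a fibration. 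Your primary route---proving (b) directly by Picard--Lefschetz and a classification of transvection-generated subgroups of $\sp_{2g}(\ff_\ell)$---is a legitimate alternative, but Remark~\ref{remmonoqlbar} only concerns the $\bar\rat_\ell$-Zariski closure; you would still owe the integral span of the vanishing cycles and a precise citation for the $\ff_\ell$-classification. The paper sidesteps all of this by leaning on the hyperelliptic result, which is exactly why Proposition~\ref{prophyperboundary} is there.

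For (c) your outline is right but the framing contains an error. The a~priori upper bound on the geometric monodromy is $\u(\integ_\ell)$, not $\u^*(\integ_\ell)$: the polarization and the $\calo_E$-action force $G\subseteq\u$, but nothing forces $\det(G)\subseteq\mmu_6\subset\T_1$ in advance---that containment is part of what must be \emph{proved}. Correspondingly, $\stk M^\ell\to\stk M$ is a $\gu(\integ/\ell)$-torsor, so the number of geometric components of $\stk M^\ell_k$ computes the index of $G\bmod\ell$ in $\gu(\integ/\ell)$ (equivalently in $\u(\integ/\ell)$, the similitude being trivial over algebraically closed $k$), not in $\u^*(\integ/\ell)$. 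With that correction your plan coincides with the paper's: Lemma~\ref{lemunitarymono} uses arithmetic toroidal compactifications to reduce the component count to $k=\cx$, then the ad\`elic description of $\pi_0$ of the Shimura variety plus a ray-class-group calculation identifies the components with $\gu(\integ/\ell)/\u^*(\integ/\ell)$, whence the monodromy on $\stk M_k$ is $\u^*(\integ/\ell)$. The transfer to $\tilstk S$ uses that $\tau:\stk S\to\stk M$ is an open immersion and $\tilstk S\to\stk S$ a fibration with connected fibers.
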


\begin{proof}
For (a), if $S\subset \stk A_{5,k}$ is any subspace, there is an {\em
  a priori} constraint $\mono(R^1f_*\integ_\ell \ra S) \subset
\sp_{10}(\integ_\ell)$.  By semicontinuity and Proposition
\ref{prophyperboundary}, $\mono(R^1f_*\integ_\ell \ra \til\varpi(\tilstk
T_k))$ contains $\mono(R^1f_*\integ_\ell \ra \stk H_5)$.  This last
group is known to be $\sp_{10}(\integ_\ell)$ (e.g., \cite[Thm.\
3.4]{achterpriesmono}, \cite{hall08}).

Part (b) then follows from the isomorphism
$\til\varpi^*R^1f_*\integ_\ell \ra R^3h_*\integ_\ell(1)$ established
in
\cite[Prop.\ 3.6]{achtercubmod}
and the fact that $\til\varpi$ is a
fibration. 

For part (c), the assertion that $\mono(R^1f_*\integ_\ell \ra \stk M)
\iso \u^*(\integ_\ell)$ follows from the transcendental uniformization
of $\stk M_\cx$ and the theory of arithmetic compactifications; the
details are provided in Section \ref{subsecunitarymono}, and in
particular Lemma \ref{lemunitarymono}.

Since $\tau:\stk S_k \ra \stk M_k$ is an open embedding, each
irreducible component of the PEL Shimura variety $\stk M_k^\ell$
analyzed in Lemma 
\ref{lemunitarymono} pulls back to an irreducible cover of $\stk S_k$,
and in particular $\mono(\til\phi^*R^3h_*\integ_\ell(1) \ra \stk S)
\iso \u^*(\integ_\ell)$.  Now use the fact that $\til S \ra S$ is a
fibration with connected fibers.
\end{proof}

\subsection{Monodromy groups for unitary Shimura varieties}
\label{subsecunitarymono}

The goal of this section is to calculate the $\ell$-adic monodromy for
a unitary Shimura variety attached to $\integ[\zeta_3]$.   It is no
harder to allow an essentially arbitrary signature condition on the
$\integ[\zeta_3]$-action, and this is useful for applications outside the
present paper \cite{achterpriesmono}. 

Some notation and background on unitary groups will be useful.   For
positive integers $r$ and $s$, let $g = r+s$ and let $V_{r,s}$ be the module
$\calo_E^{\oplus g}$, with standard basis $e_1, \cdots, e_g$.  Endow
$V_{r,s}$ with the Hermitian pairing
\[
\hang{\sum x_je_j,\sum y_je_j} = \sum_{1 \le j \le r}x_j\bar y_j -
\sum_{r+1 \le j \le g}x_j \bar y_j.
\]
As in Section \ref{subsecunitary}, define $\gu_{r,s} =
\gu(V_{r,s},\hang{\cdot,\cdot})$, and define $\su_{r,s}$ and $\u_{r,s}$ analogously.
If $\T$ and $\T_1$ are the tori defined in
\eqref{eqdeft}-\eqref{eqdeftone}, then these group schemes again sit
in exact sequences \eqref{diagesunitary}.
Finally, as in \eqref{eqdefustar}, let $\u_{r,s}^*$ be the extension of the finite group scheme
$\mmu_6$ by $\su_{r,s}$, so that  $\u_{r,s}^*(\integ/N)$ is the
extension of the global units $\calo_E\units$ by $\su_{r,s}(\integ/N)$.

If $\ell$ is inert in $\calo_E$, then
$\su_{r,s}(\integ/\ell)$ is isomorphic to the classical group
$\su(g,\ff_\ell)$, while if $\ell$ splits, then $\su_{r,s}(\integ/\ell)
\iso \SL(g,\ff_\ell)$.

\begin{lemma}
\label{lemimageunitary}
Let $\ell$ be a prime not equal to $3$.  The natural map
$\su_{r,s}(\integ) \ra \su_{r,s}(\integ/\ell)$ is surjective.
\end{lemma}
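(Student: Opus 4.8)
The plan is to deduce surjectivity from strong approximation for the simply connected group $\su_{r,s}$, combined with smoothness of its integral model at $\ell$. First I would record the relevant structural facts about $\su_{r,s}$ as a group over $\rat$: after base change to $E$ the Hermitian form trivializes and $\su_{r,s}$ becomes $\SL_g$, so $\su_{r,s}$ is a connected, simply connected, almost $\rat$-simple group (an outer form of $\SL_g$, with $g = r+s \ge 2$). Since $E/\rat$ is ramified only at $3$, the group scheme $\su_{r,s}$ over $\integ$ is a smooth semisimple group scheme over $\integ[1/3]$; in particular it is smooth over $\integ_\ell$ for every $\ell \neq 3$. Finally, because $r$ and $s$ are both positive, the real group $\su_{r,s}(\real) = \su(r,s)$ is noncompact, i.e.\ $\su_{r,s}$ is isotropic at the archimedean place.

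The first step is local surjectivity at $\ell$. For $\ell \neq 3$ the group scheme $\su_{r,s}$ is smooth over the complete local ring $\integ_\ell$ with residue field $\integ/\ell = \ff_\ell$, so Hensel's lemma shows that the reduction map $\su_{r,s}(\integ_\ell) \ra \su_{r,s}(\integ/\ell)$ is surjective. Thus any $\bar g \in \su_{r,s}(\integ/\ell)$ admits a lift $g_\ell \in \su_{r,s}(\integ_\ell)$, and I would let $K_\ell = \ker(\su_{r,s}(\integ_\ell) \ra \su_{r,s}(\integ/\ell))$, an open compact subgroup.

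The second step globalizes this lift to an honest integral point. By the Kneser--Platonov strong approximation theorem, a simply connected semisimple $\rat$-group with no $\rat$-simple factor compact at infinity has $\su_{r,s}(\rat)$ dense in the finite-adelic group $\su_{r,s}(\aff_f)$; the noncompactness of $\su(r,s)(\real)$ established above is exactly the required hypothesis (with $S = \st{\infty}$). I would then consider the open neighborhood $U = g_\ell K_\ell \times \prod_{p \neq \ell} \su_{r,s}(\integ_p)$ of the identity coset inside $\su_{r,s}(\aff_f)$. Density produces $\gamma \in \su_{r,s}(\rat) \cap U$; since $\su_{r,s}$ is affine, a $\rat$-point lying in $\su_{r,s}(\integ_p)$ for every $p$ is already integral, so $\gamma \in \su_{r,s}(\integ)$, and by construction $\gamma \equiv \bar g \pmod{\ell}$. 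As $\bar g$ was arbitrary, this gives surjectivity of $\su_{r,s}(\integ) \ra \su_{r,s}(\integ/\ell)$.

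The deduction above is formal once the structural inputs are in place, so the only point genuinely requiring care is verifying the hypotheses that make strong approximation applicable: simple connectedness of the special unitary group (standard for type $A$), isotropy at some place of the chosen set, and smoothness of the integral model at $\ell$. This is precisely where the assumptions enter — isotropy at $\infty$ forces both $r$ and $s$ to be positive, and smoothness fails at $\ell = 3$ because $E$ ramifies there, which is exactly the excluded prime. I expect no serious computational obstacle beyond checking these conditions and the affineness fact $\su_{r,s}(\rat) \cap \prod_p \su_{r,s}(\integ_p) = \su_{r,s}(\integ)$.
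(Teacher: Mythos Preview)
Your proof via strong approximation is correct, and the structural checks you flag (simple connectedness, smoothness of the integral model away from $3$, noncompactness of $\su(r,s)$ at infinity since $r,s>0$) are exactly the ones that make the argument go through.

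The paper takes a different, more elementary route. It argues by induction on $g=r+s$: for the base case $r=s=1$ it uses that $\su_{1,1}(\integ/\ell)$ is generated by isotropic transvections $\tau_{u,a}:v\mapsto a\hang{v,u}u$, and lifts each generator explicitly to $\su_{1,1}(\integ)$ by lifting $u$ to an isotropic vector and $a$ to a totally imaginary element of $\calo_E$; for the inductive step it embeds two copies of $\su_{r-1,s}$ coming from codimension-one sublattices and invokes the fact that the only subgroup of $\su_{r,s}(\integ/\ell)$ containing both images is the whole group. Your approach is shorter and more conceptual, and it generalizes immediately to any simply connected semisimple $\rat$-group isotropic at some place in $S$ with a smooth model at $\ell$. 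The paper's approach is self-contained and avoids invoking the Kneser--Platonov theorem, at the cost of a hands-on generation argument and a group-theoretic lemma about subgroups containing two embedded copies of the smaller unitary group.
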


\begin{proof}
This is presumably well-known. For want of a reference, we provide a
proof here, proceeding by induction on $r+s$. 

Suppose $r = s = 1$.  Note that, for any ring $R$, an element $v_1e_1+v_2e_2 \in
V\tensor R$ is isotropic if and only if $v_2$ is either $v_1$ or $\bar v_1$.
Now, $\su_{1,1}(\integ/\ell)$ is
generated by isotropic transvections, i.e., maps of the form
\begin{diagram}
\tau_{u,a}:v & \rmto & a\hang{v,u}u
\end{diagram}
where $u\in V\tensor\integ/\ell$ is isotropic and $a
\in \calo_E/\ell$ satisfies $a = - \bar a$.   (This is well-known
if $\calo_E/\ell \iso \ff_{\ell^2}$ is a field; if $\ell$ splits in
$\calo_E$, this is is still true, since the quotient ring is semilocal
\cite[p.\ 535]{hahnomeara}.)
It is clear that such a
$u$ admits an isotropic lift $\til u$ to $V$ itself.  Similarly, let
$\til a\in \calo_E$ be a totally imaginary element (i.e., $\til a +
\bar{\til a} = 0$) whose reduction modulo $\ell$ is $a$.  Then 
\begin{diagram}
\til \tau_{\til u, \til a}: v & \rmto &  \til a \hang{v, \til u}\til u
\end{diagram}
is an element of $\su(V_{1,1})$ whose reduction modulo $\ell$ is
$\tau_{u,a}$.  Thus, the image of $\su_{1,1}(\integ) \ra
\su_{1,1}(\integ/\ell)$ contains a generating set for the range.

Now suppose the claim is known in dimension $g-1$.  We may and do suppose without
loss of generality that $r \ge 2$.  Let $W_1 \subset V$ be the
$\calo_E$-span of $e_1, e_3, e_4,\cdots, e_g$, and let $W_2$ be the
$\calo_E$-span of $e_2, e_3, \cdots, e_g$.  Then $\su(W_i) \iso
\su_{r-1,s}$; by induction, $\su(W_i)(\integ)$ surjects onto
$\su(W_i)(\integ/\ell)$.  It is not hard to check (e.g., \cite[Lemma
3.2]{achterpriesmono}) that, independent of the splitting behavior of
$\ell$ in $\calo_E$,  the only subgroup of
$\su_{r,s}(\integ/\ell)$ which contains each $\su(W_i)(\integ/\ell)$
is $\su_{r,s}(\integ/\ell)$ itself.
\end{proof}

To ease notation, we now {\em fix} positive integers $r$ and $s$, and
write $\su$ for $\su_{r,s}$, etc.  Let $\stk M = \stk
M_{\integ[\zeta_3,1/3](r,s)}$ be the moduli stack over 
$\integ[\zeta_3,1/3]$ of triples $(X,\iota,\lambda)$ where $X$ is an
abelian scheme of dimension $g=r+s$;
$\iota:\integ[\zeta_3]\inject \End(X)$ is an action with signature
$(r,s)$; and $\lambda$ is a principal polarization compatible with
$\iota$.  Of course, the datum $(r,s) = (4,1)$ is the one relevant,
via $\tau$, to the study of cubic surfaces.

\begin{lemma}
\label{lemunitarymono}
Let $k$ be an algebraically closed field in which $3$ is invertible,
and let $\ell \ge 5$ be a rational prime invertible in $k$.  Let $t\in
\stk M(k)$ be a geometric point.  Then the image of the monodromy
representation
\begin{diagram}
\pi_1(\stk M_k, t) &\rto & \aut(\calx_t[\ell])
\end{diagram}
is $\u^*(\integ/\ell)$.
\end{lemma}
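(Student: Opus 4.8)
The plan is to squeeze the monodromy image $\Gamma \subseteq \aut(\calx_t[\ell])$ between an a priori upper bound and a count of connected components of the principal level-$\ell$ cover, to evaluate that count over $\cx$ by class field theory, and to propagate it to characteristic $p$ by means of arithmetic compactifications.

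First I would record the a priori containment $\Gamma \subseteq \u^*(\integ/\ell)$, valid in every admissible characteristic. The $\calo_E$-action from $\iota$ and the Weil pairing from $\lambda$ are monodromy-invariant; as $k$ is algebraically closed the cyclotomic character is trivial on $\pi_1(\stk M_k,t)$, so the pairing is preserved exactly and $\Gamma \subseteq \u(\integ/\ell)$. To cut down to $\u^*$, consider the $\calo_E$-linear determinant $\det_{\calo_E}\calx$, a family up to isogeny of elliptic curves with complex multiplication by $E$ of the CM type fixed by the signature $(r,s)$. Since $\calo_E = \integ[\zeta_3]$ has class number one, there is a unique such CM elliptic curve up to isomorphism, so this family is isotrivial; its $\ell$-torsion monodromy therefore factors through $\aut(X_0) = \calo_E\units = \mmu_6$, giving $\nu(\Gamma) \subseteq \mmu_6(\integ/\ell)$ and hence $\Gamma \subseteq \u^*(\integ/\ell)$.

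Next I would reduce the lemma to a component count. Let $\stk M^\ell \ra \stk M$ be the étale cover that adds a principal level-$\ell$ structure; its geometric fiber over $t$ is a torsor under $\u(\integ/\ell)$, on which $\Gamma$ acts through the monodromy, and the connected components of $\stk M^\ell_k$ are precisely the $\Gamma$-orbits. Thus
\[
\#\pi_0(\stk M^\ell_k) = [\u(\integ/\ell):\Gamma] \ge [\u(\integ/\ell):\u^*(\integ/\ell)],
\]
with equality if and only if $\Gamma = \u^*(\integ/\ell)$. So it suffices to prove that $\#\pi_0(\stk M^\ell_k) = [\u(\integ/\ell):\u^*(\integ/\ell)]$. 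I would establish this first over $\cx$, where the transcendental uniformization writes each component of $\stk M^\ell_\cx$ as the quotient of a Hermitian symmetric domain for $\u_{r,s}(\real)$ by a congruence subgroup. Strong approximation for the simply connected group $\su$ --- whose real points are noncompact since $r,s\ge 1$ --- shows the derived part of the monodromy is all of $\su(\integ/\ell)$, while the theory of connected components identifies $\pi_0(\stk M^\ell_\cx)$, via $\nu$, with $\T_1(\rat)\backslash\T_1(\aff_f)/\nu(K_\ell)$; class field theory for $E$ (again using $\calo_E\units=\mmu_6$ and $h_E=1$) evaluates this to $\T_1(\integ/\ell)/\mmu_6(\integ/\ell)$. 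Hence $\Gamma = \u^*(\integ/\ell)$ over $\cx$ and the count has the asserted value there.

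Finally I would transport the count to positive characteristic. By the theory of arithmetic toroidal and minimal compactifications of PEL Shimura stacks, $\stk M^\ell$ is a dense open substack of a proper flat stack $\barstk M^\ell$ over $\integ[\zeta_3,1/3\ell]$ with normal geometric fibers. Properness and flatness make $\#\pi_0$ of the geometric fibers constant along the connected base, and normality gives $\pi_0(\stk M^\ell_k) = \pi_0(\barstk M^\ell_k)$; therefore $\#\pi_0(\stk M^\ell_k) = \#\pi_0(\stk M^\ell_\cx) = [\u(\integ/\ell):\u^*(\integ/\ell)]$ for every $k$ of characteristic different from $3$ and $\ell$. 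Combined with the a priori bound this forces $\Gamma = \u^*(\integ/\ell)$. The main obstacle is precisely the coordination of the last two steps: one must know that the component count produced over $\cx$ by class field theory --- in particular, that the determinant factor is exactly the global units $\mmu_6$ and not a larger subgroup of $\T_1(\integ/\ell)$ (cf.\ Remark \ref{remwhoops}) --- remains valid in characteristic $p$, and it is the arithmetic compactification theory, supplying a proper flat model with controlled fibers, that guarantees this.
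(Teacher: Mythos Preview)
Your core strategy---reduce to counting the geometric connected components of $\stk M^\ell$, carry out that count over $\cx$ via the ad\`elic double-coset description and class field theory for $E$, and then propagate to positive characteristic through arithmetic toroidal compactifications---is exactly the paper's argument. The paper phrases everything with the $\gu$-torsor and the torus $\T$ (so that $\pi_0(\stk M^\ell_\cx)$ is literally a ray class group for $E$), whereas you work with the $\u$-torsor and $\T_1$; this is a harmless renormalization once $k$ is algebraically closed.

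The one substantive difference is your a priori upper bound $\Gamma\subseteq\u^*(\integ/\ell)$ via the ``$\calo_E$-linear determinant $\det_{\calo_E}\calx$'' viewed as an isotrivial family of CM elliptic curves. The paper does not use this step, and as written it is not fully justified: producing $\det_{\calo_E}\calx$ as an honest abelian scheme over $\stk M$ (rather than merely an $\ell$-adic local system) is delicate in positive characteristic, and the isotriviality claim really needs the geometric object, not just the sheaf. Fortunately the step is redundant. Your own identification of $\pi_0(\stk M^\ell_\cx)$ with $\T_1(\integ/\ell)/\mmu_6$ is an identification \emph{of $\u(\integ/\ell)$-sets} via $\nu$, so the stabilizer of each component is the normal subgroup $\nu^{-1}(\mmu_6)=\u^*(\integ/\ell)$; that already pins down $\Gamma$ exactly, and this is how the paper concludes. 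If you drop the determinant paragraph (or replace it by the one-line remark that $\u^*$ is normal in $\u$, so the index computation determines $\Gamma$ on the nose), your argument matches the paper's.
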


\begin{remark}
\label{remwhoops}
Suppose that $\frac{g-1}{3} \le r, s \le \frac{2g+1}{3}$.  Then some
$\integ[\zeta_3]$-abelian varieties of signature $(r,s)$ are actually
Jacobians of cyclic triple covers of the projective line \cite[Lemma
2.9]{achterpriesmono}.  Lemma \ref{lemunitarymono} is compatible with
\cite[Thm.\ 5.4]{katzact} but not, alas, \cite[Thm.\
3.8]{achterpriesmono}; in spite of its claims to the contrary, \cite{achterpriesmono} only calculates the
connected component of the monodromy group.
\end{remark}

\begin{proof}
We wish to describe the action of $\pi_1(\stk M_k,t)$ on
$\calx_t[\ell]$.  Equivalently, we wish to describe the action of
$\pi_1(\stk M_k, t)$ on the set of {\em principal level $\ell$
  structures} on $\calx_t[\ell]$.  A principal level $\ell$ structure on
a polarized abelian scheme with $\calo_E$-action
$(X/S,\iota,\lambda) \in \stk M(S)$ is the data of an
$\calo_E$-equivariant isomorphism $\alpha:(V\tensor \calo_E/\ell)_S \sriso
 X[\ell]$ and an isomorphism $\nu:\integ/\ell(1)_S \sriso
 \mmu_{\ell,S}$ such that the following diagram commutes:
\begin{diagram}
(V\tensor\calo_E/\ell)_S \cross_S (V\tensor\calo_E/\ell)_S &
\rto^{\ang{\cdot,\cdot}} & \integ/\ell(1)_S \\
\dto<{\alpha\cross\alpha} &&\dto>{\nu} \\
X[\ell] \cross_S X[\ell] & \rto^{e^\lambda} & \mmu_{\ell,S}.
\end{diagram}
(Actually, we have chosen and then suppressed an isomorphism $\integ \sriso
\integ(1)$, so that $\ang{\cdot,\cdot}$ takes values in $\integ(1)$.)
This formulation of level structure is taken from
\cite[1.3.6.1]{lanbook}; it is equivalent \cite[1.4.3.4]{lanbook} to the perhaps more frequent
``rational level structure on abelian varieties up to isogeny'' used
in, e.g., \cite{kottwitz}, and more convenient for the application
here.   

Let $\stk M^\ell$ be the moduli space of principally polarized abelian
varieties with $\integ[\zeta_3]$-structure of signature $(r,s)$ and
principal level $\ell$ structure. 
Then, away from characteristic $\ell$, $\stk M^\ell$ is a 
$\gu(\integ/\ell)$-torsor over $\stk M$.  (In fact, the choice of an
isomorphism $\alpha$ determines the isomorphism $\nu$; but explicitly tracking both pieces of information makes it clear that $\stk M^\ell$ really is a $\gu(\integ/\ell)$-torsor.)  

Let $t = (X,\iota,\lambda) \in \stk M(k)$, and let $\til t =
(X,\iota,\lambda,(\alpha,\nu))\in \stk M^\ell(k)$ be a point lying
over it. 
The action
of $\pi_1(\stk M_k, t)$ on $\calx_t[\ell]$ is the action of
$\pi_1(\stk M_k, t)$ on $\til t$.  In particular, the
mod-$\ell$ monodromy group is (noncanonically) isomorphic to the
group of automorphisms of an irreducible component of $\stk M^\ell_k$
over $\stk M_k$.  Thanks to the existence of arithmetic toroidal
compactifications \cite[6.4.1.1]{lanbook} and the usual Zariski
connectedness argument (e.g., \cite[6.4.12]{lanbook}), it suffices to carry out this last calculation
in the special case $k = \cx$.

There is a double quotient of the ad\`elic space $\gu(\aff)$ which
encodes the structure of the set of components of $\stk M^\ell_\cx$.  This is
in \cite[2.9]{delignecorvallis}, but we follow here the
more expansive treatment in \cite[Sec.\ 5]{milneintroshim}.  

Let $\aff_\fin$ denote the ring of finite ad\`eles, and let $\K_\ell = \st{
  \alpha \in G(\hat\integ): \alpha \equiv \id \bmod \ell}\subset G(\aff_\fin)$.  
Then $\stk M^\ell_\cx$ admits the
uniformization
\[
\stk M^\ell_\cx = \gu(\rat)\bs {\mathbb X} \cross \gu(\aff_\fin)/\K_\ell,
\]
where the Hermitian symmetric domain ${\mathbb X}$ is a certain $\gu(\real)$-conjugacy class of homomorphisms
$\res_{\cx/\real}\gp_{m,\cx} \ra \gu_\real$.

Now, $\T$ is the maximal abelian quotient of $\gu$, $\T(\real)\iso
\cx\units$ is (topologically) connected, and $\su$ is
simply connected.  Consequently, there is a canonical bijection between
irreducible components of $\stk M^\ell_\cx$ and the finite group
\begin{equation}
\label{eqdq}
\T(\rat)\bs \T(\aff_\fin)/\nu(\K_\ell)  = E\units \bs \aff_{E,\fin}\units /
\nu(\K_\ell)
\end{equation}
\cite[5.17 and p.311]{milneintroshim}.

We compute this last quotient.  The compact group $\nu(\K_\ell)$ is
simply the group of finite id\`eles in $E$ which are everywhere integral and congruent to one modulo
$\ell$. Indeed, let $\idl$ be the modulus associated to $\ell$; then
$\nu(\K_\ell)$ is the finite part of the congruence group
$\aff_{E,\idl}\units$.  We thus recognize the quotient in
\eqref{eqdq} as 
\[
\Cl_E^\idl = \frac{\aff_{E}\units/E\units}{E\units
  \aff_{E,\idl}\units/E\units},
\]
the ray class group modulo $\idl$.  The ray class group may, of
course, be computed via ideals, instead of id\`eles.  Since $E$ is
totally imaginary, $\Cl_E^\idl$ fits into an exact sequence
\cite[Thm.\ 5.1.7]{milnecft} 
\begin{diagram}
0 & \rto & \calo_E\units/\calo_{E,\idl}\units & \rto &
(\calo_K/\ell)\units & \rto & \Cl_E^\idl & \rto & \Cl_E & \rto & 0,
\end{diagram}
where $\Cl_E$ is the class group of $E$ and $\calo_{E,\idl}\units$ is
the group of global units which are one modulo $\ell$.  These two
groups are trivial because, respectively, $\integ[\zeta_3]$ has unique
factorization and $\ell \ge 5$.  Consequently,  $\Cl_E^\idl \iso
(\calo_E/\ell)\units/\calo_E\units$.

In particular, the irreducible components of $\stk M^\ell_\cx$ are
parametrized by the cosets $\gu(\integ/\ell)/\u^*(\integ/\ell)$.
Therefore, the stabilizer of each component of $\stk M^\ell_\cx$ is
$\u^*(\integ/\ell)$, as claimed.
\end{proof}

\subsection{Small endomorphism rings over $\rat$}
\label{subsecenddescent}

Carlson and Toledo have given explicit examples of cubic surfaces
$Y/K$ over number fields such that $\End(Q(Y))$ is large.  The
following result shows that the generic (i.e., small) endomorphism
ring is also attainable over number fields.  Note that Corollary
\ref{corexistsmallend} is not an immediate consequence of Lemma 
\ref{lemgenmt}.  If $T$ is a variety over $\rat$ and $\ww \ra T_\cx$
is a variation of Hodge structure, cardinality considerations alone do
not exclude the possibility that the Hodge-generic locus of $\ww \ra T_\cx$ avoids
all of $T(\bar\rat)$.

\begin{corollary}
\label{corexistsmallend}
\begin{alphabetize}
\item There exists a cubic threefold $Z/\rat$ such that
  $\End_{\bar\rat}(P(Z)) \iso \integ$.
\item There exists a cubic surface $Y/\rat$ such that
  $\End_{\bar\rat}(Q(Y)) \iso \integ[\zeta_3]$.
\end{alphabetize}
\end{corollary}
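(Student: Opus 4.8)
The plan is to produce these abelian varieties as members of the universal families whose monodromy was just computed, and then use a specialization argument to descend the generic (small) endomorphism ring from the geometric generic fiber to a rational point. The key input is Theorem \ref{thmono}: the geometric monodromy group of $R^1f_*\integ_\ell$ (equivalently $R^3h_*\integ_\ell(1)$) over $\tilstk T$ is all of $\sp_{10}(\integ_\ell)$, and the geometric monodromy over $\tilstk S$ is $\u^*(\integ_\ell)$ when $\ell\neq 3$. Large monodromy forces the endomorphism algebra of the geometric generic fiber to be as small as possible: for $P(Z)$, a symplectic $\sp_{10}$-monodromy is incompatible with any nontrivial endomorphism, so $\End_{\bar\rat}(P(Z_{\bar\eta}))\iso\integ$; for $Q(Y)$, the $\u^*$-monodromy (whose image contains $\su_{4,1}(\integ_\ell)$) forces $\End_{\bar\rat}(Q(Y_{\bar\eta}))_\rat$ to be exactly $E=\rat(\zeta_3)$, and combined with the integral structure one gets $\End_{\bar\rat}(Q(Y_{\bar\eta}))\iso\integ[\zeta_3]$.

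First I would set up the specialization machinery. Since $\tilstk T$ and $\tilstk S$ are smooth (geometrically irreducible) schemes defined over $\rat$ (indeed over $\integ[\zeta_3,1/6]$), the generic point has a residue field that is a function field over $\rat$, and its geometric generic fiber carries the maximal possible monodromy. The $\ell$-adic monodromy of the geometric generic fiber is computed by the image of the geometric fundamental group; by Theorem \ref{thmono} this is $\sp_{10}(\integ_\ell)$ (resp.\ $\u^*(\integ_\ell)$). Now I would invoke a standard specialization result for monodromy of lisse sheaves over a scheme of finite type over $\rat$: there is a \emph{thin} (or Hilbert-irreducible, non-thin complement) set of rational points $t\in\tilstk T(\rat)$ at which the monodromy of the fiber equals the generic geometric monodromy. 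Concretely, one uses a Hilbert irreducibility argument — each irreducible cover of the base corresponding to a level structure remains irreducible over $\rat$-fibers outside a thin set — to find a $\rat$-point at which the image of the arithmetic-minus-geometric discrepancy is controlled and the full geometric monodromy is realized.

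The technical heart, and the reason the corollary is stated separately from Lemma \ref{lemgenmt}, is exactly the passage from the geometric generic point to a \emph{closed} $\rat$-rational point without losing the smallness of the endomorphism ring; as the authors emphasize, cardinality considerations alone do not exclude the Hodge-generic locus from avoiding all of $T(\bar\rat)$. The remedy is that endomorphisms are detected by the $\ell$-adic (or $\bmod\ell$) Galois/monodromy representation, which specializes well: if the monodromy group of a fiber $X_t$ is as large as the generic one, then $\End_{\bar\rat}(X_t)$ can be no larger than the generic endomorphism ring, because any extra endomorphism would cut down the image of $\pi_1$ in $\aut(X_t[\ell])$. So I would argue that the set of $t$ with non-maximal $\bmod\ell$ monodromy image is thin (a proper closed condition on the cover), and then apply Hilbert irreducibility over $\rat$ to guarantee a $\rat$-point $t$ with maximal $\bmod\ell$ image for at least one suitable $\ell\ge 5$. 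At such a point, maximal monodromy forces $\End_{\bar\rat}(P(Z_t))\iso\integ$ and $\End_{\bar\rat}(Q(Y_t))\iso\integ[\zeta_3]$.

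The main obstacle I anticipate is making the Hilbert-irreducibility/specialization step rigorous at the integral (not merely $\bar\rat_\ell$-Zariski-closure) level, and ensuring that the chosen $\rat$-point really lies in the open locus $\tilstk S(\rat)$ (respectively $\tilstk T(\rat)$) of \emph{smooth} cubics rather than on the discriminant. One must verify that the ``good reduction / smooth fiber'' condition and the ``maximal $\bmod\ell$ monodromy'' condition are simultaneously satisfiable over $\rat$ — i.e.\ that the thin exceptional set does not swallow all smooth rational fibers. Because $\tilstk S$ and $\tilstk T$ are rational varieties with many $\rat$-points and the exceptional set is thin, this is expected to go through, but it is the step where the delicate interplay between the arithmetic of the base and the integral monodromy computation of Theorem \ref{thmono} must be handled with care.
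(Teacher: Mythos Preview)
Your proposal is correct and follows essentially the same approach as the paper: invoke Theorem \ref{thmono} for large mod-$\ell$ monodromy, use Hilbert irreducibility on the rational parameter space $\tilstk S$ (resp.\ $\tilstk T$) to find a $\rat$-point where the mod-$\ell$ monodromy cover remains irreducible, and then read off the endomorphism ring as the commutant of the Galois image. The paper makes precise exactly the step you flag as the obstacle by first passing to an auxiliary number field $K$ over which a component $\stk M_K^{\ell,0}$ of the level-$\ell$ cover is geometrically irreducible, applying Hilbert irreducibility (citing Serre and Terasoma) on $\tilstk S_K$ which is open in projective space, and then noting that the thin exceptional set remains thin when intersected with $\tilstk S(\rat)$.
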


\begin{proof}
This essentially follows from Theorem \ref{thmono} and a result of
Terasoma \cite[Thm.\ 2]{terasoma85}.  Full details are provided here
for part (b); part (a) is entirely analogous.

Fix a rational prime $\ell \ge 5$  which is inert in $\rat(\zeta_3)$.
As in Section \ref{subsecunitary}, let $\stk M^\ell$ be the moduli
space of principally polarized abelian
varieties with $\integ[\zeta_3]$-structure of signature $(4,1)$ and
principal level $\ell$ structure.  Let $K$ be any number field such
that every irreducible component of $\stk M_K^\ell$ is geometrically
irreducible, and let $\stk M_K^{\ell,0}$ be one such component.

 Let $\tilstk
M_K^{\ell,0}$ be the pullback of $\stk M_K^{\ell,0}$ to $\tilstk
S_K$.  Since $\tau$ is an open immersion and $\tilstk S \ra \stk S$ is
a fibration, $\tilstk M_K^{\ell,0} \ra \tilstk S$ is an absolutely
irreducible Galois cover of varieties, with Galois group
$\u^*(\integ/\ell)$ (Lemma \ref{lemunitarymono}).
Since $\tilstk S_K$
is an open subset of a projective space, by Hilbert's irreducibility
theorem \cite[Prop.\ 3.3.1]{serre_galtheory} there is a thin set
$A\subset \tilstk 
S(K)$ such that if $s\in \tilstk S(K) \setcomp A$, then $\tilstk M_s^{\ell,0} \ra s
\iso \spec K$ is again irreducible.

For such an $s$, the absolute endomorphism ring of $Q(\stk Y_s)$
is $\integ[\zeta_3]$.  Indeed, the image of the Galois representation
$\pi_1(s, \bar s) \iso \gal(K) \ra \aut(T_\ell Q(\stk Y_s))$
contains $\su(\integ_\ell)$, and thus for any finite extension
$L/K$, the image of $\gal(L)$ contains an open subgroup of
$\su(\integ_\ell)$.  Since the $\ell$-adic completion of the
endomorphism ring of $Q(\stk Y_s)_L$ is the commutant of the
image of $\gal(L)$, it follows that $\End_L(Q(\stk Y_s)) \iso
\integ[\zeta_3]$.

Moreover, $A\cap \tilstk S(\rat)$ is also thin in $\tilstk S(\rat)$
\cite[Prop.\ 3.2.1]{serre_galtheory}, so there are many suitable cubic
surfaces defined over $\rat$.
\end{proof}

\subsection{The Galois group of the 27 lines}
\label{subsec}

Camille Jordan investigated (\cite[III.II.V]{jordan70}; see also
\cite{harrisenum}) the structure of the field of definition 
of the 27 lines on a generic cubic surface in characteristic zero.
His findings may be expressed in moduli-theoretic terms, as
follows.

As in \cite[Sec.\ 2.1]{achtercubmod} and \cite[Sec.\ 2]{matsumototerasoma}, a marking on a cubic surface $Y$ is an
identification $\Psi$ of the configuration of lines on $Y$ (i.e., the 27
lines and their incidence relations) with an abstract configuration of
this type.   The set of all markings on a fixed cubic surface is a
torsor under $W(\esix)$, the Weyl group of the exceptional root system
$E_6$.  Let $\stk S^\marked\ra \spec \calo_E[1/6]$ be the moduli space (actually, a fine
moduli scheme) of marked cubic
surfaces.  For a field $K$ equipped with a map $\calo_E[1/6]\ra K$, let $K(\stk
C^\marked)$ be the ring of rational functions on $\stk S^\marked
\cross \spec K$; similarly, let $K(\stk S)$ be the ring of rational
functions on $\coarse{\stk S}\cross \spec K$, where $\coarse{\stk S}$
is the coarse moduli scheme of the stack $\stk S$.

Jordan's results, in modern language, show that $\stk S^\marked_\cx$ is
irreducible, and $\cx(\stk S^\marked)/\cx(\stk S)$ is a Galois
extension of fields with Galois group $W(\esix)$.  In fact, this is true
if $\cx$ is replaced with an arbitrary field, and in particular one of
positive characteristic.

For a prime $\idp \in \calo_E[1/6]$, let $\kappa(\idp)$ be its residue field.

\begin{proposition}
\label{propfieldoflines}
\begin{alphabetize}
\item Each fiber of $\stk S^\marked \ra \spec \calo_E[1/6]$ is
  geometrically irreducible.
\item For $[\idp] \in \spec \calo_E[1/6]$, $\stk S^\marked \cross
  [\idp]$ is geometrically irreducible, and $\kappa(\idp)(\stk
  S^\marked)/\kappa(\idp)(\stk S)$ is a Galois extension of fields
  with group $W(\esix)$.
\end{alphabetize}
\end{proposition}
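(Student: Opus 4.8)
The plan is to reduce everything to a single geometric statement: that in each residue characteristic the geometric monodromy of the configuration cover surjects onto $W(\esix)$. Since the set of markings on a cubic surface is a torsor under $W(\esix)$, the morphism $\stk S^\marked \ra \stk S$ is a finite \'etale $W(\esix)$-torsor over $\spec\calo_E[1/6]$. For a geometric point $[\idp]$, the number of connected (equivalently, since $\stk S^\marked$ is smooth, irreducible) components of the fiber $\stk S^\marked \cross [\idp]$ is the number of orbits of the geometric monodromy representation $\rho_\idp:\pi_1(\stk S_{\overline{\kappa(\idp)}}) \ra W(\esix)$ on the torsor. Thus part (a) is equivalent to the surjectivity of each $\rho_\idp$, and once (a) holds the Galois assertion of part (b) is formal: a geometrically connected $W(\esix)$-torsor over the geometrically connected base $\stk S$ has function field a Galois extension with full group $W(\esix)$. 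The characteristic-zero case of surjectivity is exactly Jordan's theorem, recorded above.

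First I would phrase the cover group-theoretically through $\tau$. Reducing the Hermitian module $V$ modulo the \emph{ramified} prime $\theta\mid 3$, where $\calo_E/\theta\iso\ff_3$ and conjugation becomes trivial, turns the Hermitian form into a rank-five quadratic form and identifies $W(\esix)$ with the corresponding finite orthogonal group over $\ff_3$; concretely, the marking cover is the $\theta$-level structure on the universal $\integ[\zeta_3]$-abelian fivefold $Q(\caly)$. Because $\theta$ divides $3$ and $3$ is inverted in the base, this level structure is \'etale over $\calo_E[1/6]$, and since every residue characteristic in play is at least $5$ the level is prime to the residue characteristic; the cover therefore sits inside the PEL Shimura-variety framework of Section \ref{subsecunitarymono}, which is the point of contact with the machinery already developed.

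The transfer from characteristic zero to characteristic $p\ge 5$ then proceeds exactly as in Lemma \ref{lemunitarymono}: I would pass to the arithmetic toroidal compactification of the relevant $\theta$-level Shimura variety (Lan) and invoke the Zariski connectedness argument to conclude that the number of geometric components of the fibers is locally constant on $\spec\calo_E[1/6]$. Jordan's theorem supplies the value $1$ in characteristic zero, so every fiber is geometrically irreducible; since the compactification has normal, geometrically connected fibers, these fibers are geometrically irreducible, and the dense open $\stk S^\marked$ inherits geometric irreducibility, giving (a) and hence (b). An alternative, more hands-on route is to build a relative normal-crossings compactification of $\stk S$ over the trait, observe via Picard--Lefschetz that the local monodromy of the configuration cover around the discriminant is generated by order-two reflections, and deduce that the cover is tame; then the specialization homomorphism $sp:\pi_1(\stk S_{\overline\eta})\ra\pi_1(\stk S_{\overline s})$ is defined and compatible with the cover, and the factorization $\rho_{\overline\eta}=\rho_{\overline s}\comp sp$ forces $\rho_{\overline s}$ to be surjective whenever $\rho_{\overline\eta}$ is.

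I expect the genuine obstacle to be the prime $p=5$. Since $|W(\esix)|=2^7\cdot 3^4\cdot 5$, the configuration cover is \emph{not} prime-to-$p$ there, so the easy prime-to-$p$ specialization isomorphism --- which would settle every $p\ge 7$ at once --- is unavailable, and one is forced either to control wild ramification (handled by the reflection/Picard--Lefschetz tameness observation) or to appeal to the proper compactification. A secondary technical point is that, because the marking corresponds to a level structure at the ramified prime $\theta\mid 3$, one cannot simply cite Lemma \ref{lemunitarymono} (stated for $\ell\ge 5$); the double-coset/ray-class-group component count must be re-examined at modulus $\theta$, and one must check that the $\theta$-level PEL datum is covered by Lan's construction over $\calo_E[1/6]$, which it is precisely because $3$ has been inverted.
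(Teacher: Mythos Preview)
Your main line of argument is correct and is essentially the paper's own proof: identify $\stk S^\marked$ with the $(1-\zeta_3)$-level cover $\stk N^{(1-\zeta_3)}\subset\stk M^{(1-\zeta_3)}$ via the Torelli isomorphism, then use Lan's arithmetic toroidal compactification and Zariski connectedness to transport Jordan's characteristic-zero irreducibility to every fiber over $\spec\calo_E[1/6]$. Your worries about $p=5$ and about the ramified level $\theta$ are unnecessary once you commit to the compactification route, since that argument is insensitive to the order of the covering group and only requires that the level be prime to the residue characteristic (here $3$ is inverted); the paper simply cites Lan and Jordan and is done, without invoking Lemma~\ref{lemunitarymono} or any tameness/Picard--Lefschetz analysis.
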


\begin{proof}
Part (b) is a reformulation of part (a) which, like Lemma
\ref{lemunitarymono}, is a consequence of a good 
theory of arithmetic compactification of PEL Shimura varieties.
Indeed, let $\stk M^{(1-\zeta_3)} \ra \spec \integ[\zeta_3,1/6]$ be the moduli
space of principally polarized abelian $\integ[\zeta_3]$-fivefolds
with full $(1-\zeta_3)$-level structure, i.e., with a 
rigidification of the kernel of multiplication by $1-\zeta_3$; see
\cite[Sec.\ 2.2]{achtercubmod} for more details.  Then $\stk
M^{(1-\zeta_3)} \ra \stk M$ is a $W(\esix)$-torsor.  Moreover, let $\stk
N^{(1-\zeta_3)} = \stk N \cross_{\spec M} \stk M^{(1-\zeta_3)}$;
there is an isomorphism of fine moduli schemes $\stk S^\marked \sriso
\stk N^{(1-\zeta_3)}$ \cite[Thm.\ 5.7]{achtercubmod}.

In particular, for an algebraically closed field $K$ equipped with a morphism
$\calo_E[1/6] \ra K$, there is a canonical bijection between the
irreducible components of $\stk S^\marked_K$ and of $\stk
N^{(1-\zeta_3)}_K$.  Since $\stk N \ra \stk M$ is fiberwise an open,
dense immersion, each of these is in bijection with the irreducible
components of $\stk M^{(1-\zeta_3)}_K$.  Finally, the existence of smooth
compactifications \cite[6.4.1.1]{lanbook} implies that the irreducible
components of $\stk M^{(1-\zeta_3)}_K$ are in bijection with those of
$\stk M^{(1-\zeta_3)}_\cx$.  The desired result now follows from
Jordan's theorem.
\end{proof}

Thus, if $K$ is a Hilbertian field (in which $6$ is invertible), then
for most cubic surfaces $Y/K$, the field of definition $K(\Psi_Y)$ of
the 27 lines on $Y$ is as large as possible; $\gal(K(\Psi_Y)/K) \iso
W(\esix)$.  In particuar, $\gal(K)$ acts as the full automorphism group of the
configuration of 27 lines.

Nonetheless, using the theory of complex multiplication, we are able
to identify a class of cubic surfaces for which $\gal(K)$ acts on
$\Psi_Y$ via an abelian quotient.  For example, suppose $Y \in \stk S(\cx)$
is special, i.e., that the Mumford-Tate group $\mt(X)$ of $X = Q(Y)$ is a torus.
Then $X$ is an abelian variety of CM type, and is defined over a
totally imaginary number field $K$.  The 
Torelli map is an isomorphism of stacks over $E$, and so $Y$ is also
defined over $K$.  Let 
$[Y,\Psi] \in \stk S(\bar K)$ be the moduli point of a marking of the
cubic surface $Y$, and let $[X,\Phi] = \tau^\marked([Y,\Psi])$.  Then
$K(\Psi)\iso K(\Phi)$ is the field obtained by adjoining the
coordinates of all $(1-\zeta_3)$-torsion of $X$ to $K$, and thus is an
abelian extension of $K$.

Moreover, the reciprocity law attached to the Shimura variety $\stk
M^\marked$ \cite{delignecorvallis} explicitly describes the action of
$\gal(K\ab/K)$ on the moduli point $[X,\Phi]$, and thus gives a class-field-theoretic
description of the action of $\gal(K\ab/K)$ on the 27 lines on $Y$.

\bibliographystyle{hamsplain}
\bibliography{jda}

\end{document}